\newtheorem{theorem}{Theorem}[section]
\newtheorem{lemma}[theorem]{Lemma}
\newtheorem{ass}[theorem]{Assumption}
\theoremstyle{definition}
\newtheorem{definition}[theorem]{Definition}
\theoremstyle{remark}
\newtheorem{remark}[theorem]{Remark}
\numberwithin{equation}{section}
\setlist{leftmargin=*}
\newcommand\nc{\newcommand}
\nc{\on}{\operatorname}
\nc{\E}{\mathbf{E}}
\nc{\R}{\mathbb R}
\nc{\C}{\mathbb C}
\nc{\Q}{\mathbb Q}
\nc{\Z}{\mathbb Z}
\nc{\N}{\mathbb N}
\nc{\F}{\mathbb F}
\nc{\wt}{\widetilde}
\nc{\ol}{\overline}
\nc{\short}[3]{0 \longrightarrow #1 \longrightarrow #2 \longrightarrow #3 \longrightarrow 0}
\nc{\pd}[2]{\frac{\partial #1}{\partial #2}}
\nc{\rnc}{\renewcommand}
\nc{\e}{\varepsilon}
\nc{\DMO}{\DeclareMathOperator}
\nc{\grad}{\nabla}
\nc{\Exp}{\mathbf{Exp}}
\nc{\fsp}{\fontdimen2\font=2.2pt}
\rnc{\t}{\mathfrak{t}}
\nc{\s}{\mathfrak{s}}
\nc{\x}{\mathrm{x}}
\nc{\y}{\mathrm{y}}
\nc{\z}{\mathrm{z}}
\rnc{\leq}{\leqslant}
\rnc{\geq}{\geqslant}
\rnc{\d}{\mathrm{d}}
\rnc{\O}{\mathrm{O}}
\newenvironment{nouppercase}{%
  \renewcommand{\uppercasenonmath}[1]{}}{}
\title{\large Hairer-Quastel universality in non-stationarity via energy solution theory}
\author{Kevin Yang}
\begin{document}
\setstretch{1.0}
\fontdimen2\font=2.2pt
\raggedbottom
\begin{nouppercase}
\maketitle
\end{nouppercase}
\vspace{-25pt}
\begin{abstract}
The paper addresses \emph{probabilistic} aspects of the KPZ equation and stochastic Burgers equation by providing a solution theory that builds on the energy solution theory \cite{GJ15,GJara,GP,GP20}. The perspective we adopt is to study the stochastic Burgers equation by writing its solution as a probabilistic solution \cite{GP17} plus a term that can be studied with deterministic PDE considerations. One motivation is universality of KPZ and stochastic Burgers equations for a certain class of stochastic PDE growth models, first studied in \cite{HQ}. For this, we prove universality for SPDEs with general nonlinearities, thereby extending \cite{HQ,HX}, and for many non-stationary initial data, thereby extending \cite{GP16}. 

Our perspective lets us also prove explicit rates of convergence to white noise invariant measure of stochastic Burgers for \emph{non-stationary} initial data, in particular extending the spectral gap result of \cite{GP20} beyond stationary initial data, though for non-stationary data our convergence will be measured in Wasserstein distance and relative entropy, not via the spectral gap as in \cite{GP20}. Actually, we extend the spectral gap in \cite{GP20} to a log-Sobolev inequality. Our methods can also analyze fractional stochastic Burgers equations \cite{GP20}; we discuss this briefly. 

Lastly, we note our perspective on the KPZ and stochastic Burgers equations gives a first \emph{intrinsic} notion of solutions for general continuous initial data, in contrast to H\"{o}lder regular data needed for regularity structures \cite{Hai14}, paracontrolled distributions \cite{GPI}, and H\"{o}lder-regular Brownian bridge data for energy solutions \cite{GJara,GP}.
\end{abstract}
\section{Introduction}
The \emph{Kardar-Parisi-Zhang} (KPZ) equation is a conjecturally canonical model for interface fluctuations derived in \cite{KPZ} via non-rigorous renormalization group heuristics. Such ``interface fluctuations" include flame propagation, infected individuals in an epidemic, bacterial colony and tumor growth, and crack formations; see \cite{C11} and \cite{KPZ}. However, rigorous proof of this so-called ``weak KPZ universality" problem has posed a challenging mathematical problem, primarily because the KPZ equation is a \emph{singular stochastic PDE}; there are ultraviolet ``UV" divergences whose rigorous analysis is highly nontrivial and was ultimately resolved by Hairer \cite{Hai13} using a first version of what would be later developed into a theory of regularity structures \cite{Hai14}. We explain this singular nature of KPZ and related SPDEs in the context of weak universality. First, however, let us introduce the KPZ equation below, which we pose on the space-time set $\R_{\geq0}\times\mathbb{T}$, where $\mathbb{T}=\R/\Z$ is the unit one-dimensional torus, and introduce a Gaussian space-time white noise on $\mathbb{T}$, defined to be a Gaussian random field with covariance kernel formally given by $\E\xi_{\t,\x}\xi_{\s,\y}=2\delta_{\t=\s}\delta_{\x=\y}$:
\begin{align}
\partial_{\t}\mathbf{h} = \Delta\mathbf{h} + \beta|\grad\mathbf{h}|^{2} + \xi. \label{eq:KPZ}
\end{align}
The constant $\beta$ is taken to be a non-negative number; its sign is not too important as we can always take $-\mathbf{h}$ instead of $\mathbf{h}$, and the noise is statistically invariant under sign change. The $\beta$ constant will eventually be a homogenized coefficient in our main universality result. We will be mostly concerned with generalizations of \eqref{eq:KPZ} in which we replace $\beta|\grad\mathbf{h}|^{2}$ by a general function of $\grad\mathbf{h}$. In particular, let us define $\mathbf{g}^{\e}$ as the solution to the following SPDE, in which $\mathbf{F}$ is a general function and $\e$ is a small positive parameter we eventually take to zero in order to recover the KPZ equation:
\begin{align}
\partial_{\t}\mathbf{g}^{\e} = \Delta\mathbf{g}^{\e} + \e^{-1}\mathbf{F}(\e^{1/2}\grad\mathbf{g}^{\e}) + \xi. \label{eq:SPDE}
\end{align}
We eventually consider solutions to a slightly and technically modified version of \eqref{eq:SPDE}, but for transparency of illustration we consider formally \eqref{eq:SPDE}. To illustrate weak universality and singular behavior, one can Taylor expand the $\mathbf{F}$ nonlinearity on the RHS of \eqref{eq:SPDE} about its value at 0. The first two terms, which are constant and linear in $\grad\mathbf{g}^{\e}$, can be removed when we insert them back into the SPDE \eqref{eq:SPDE} by global time-shift and linear change-of-reference along a constant speed characteristic. We are left with the quadratic term in $\grad\mathbf{g}^{\e}$ plus terms that are formally vanishing in the small $\e$ limit, so we are led to believe $\mathbf{g}^{\e}$ converges to \eqref{eq:KPZ} for $\beta = 2^{-1}\mathbf{F}''(0)$. The convergence to KPZ is true, but the argument is \emph{wrong} as the effective coefficient is global in $\mathbf{F}''$. We make precise the correct effective coefficient later in this introduction, but we remark some version of both claims in the previous sentence are proved in \cite{GP16,HQ,HX}. 

In \cite{HQ,HX}, the method of proving universality is based on the theory of regularity structures in \cite{Hai14}. In \cite{GP16}, the method of proof is based on \emph{energy solution theory} \cite{GJ15, GP}. On the one hand, because regularity structures is based on pathwise and analytic considerations, the notion of convergence in \cite{HQ,HX} is much stronger than that in \cite{GP16}. Moreover, the nature of the smoothing of \eqref{eq:KPZ} is much more general than in \cite{GP16}, as \cite{GP16} strongly depends on both the Markovian structure of the smoothed SPDEs \eqref{eq:SPDE} and the explicit nature of invariant measures. On the other hand, the paper \cite{GP16} treats a much larger set of nonlinearities than \cite{HQ,HX}. For example, \cite{GP16} can treat \eqref{eq:SPDE} for $\mathbf{F}(\x)=|\x|$, but \cite{HQ,HX} are far from treating this non-smooth nonlinearity. 

Currently, however, energy solution theory \cite{GP16} is exclusively applicable to Brownian initial data. One point of this paper is to extend \cite{GP16} to arbitrary continuous initial conditions for \eqref{eq:KPZ} and extend the energy solution theory from stationary Brownian initial data to general continuous initial data; this would actually provide a first intrinsic solution theory for singular SPDEs with arbitrary continuous initial data instead of quantitatively H\"{o}lder initial data.

Before we proceed with a precise discussion of universality, however, we must introduce the following \emph{stochastic Burgers equation} (SBE), whose solution $\mathbf{u}$ is related to $\mathbf{h}$ from \eqref{eq:KPZ} by taking a weak derivative $\mathbf{u}=\grad\mathbf{h}$:
\begin{align}
\partial_{\t}\mathbf{u} = \Delta\mathbf{u} + \beta\grad(\mathbf{u}^{2}) + \grad\xi. \label{eq:SBE}
\end{align}
Technically, results of \cite{GP16} are stated at the level of \eqref{eq:SBE}, not \eqref{eq:KPZ}, but as shown in \cite{GP} the difference between \eqref{eq:KPZ} and \eqref{eq:SBE}, which is a constant eliminated via differentiation, poses no problem. In particular, our aforementioned extension of energy solutions \cite{GJ15, GP} will study \eqref{eq:SBE} instead of \eqref{eq:KPZ}, but it can treat \eqref{eq:KPZ} without much difficulty.

Having introduced the stochastic Burgers equation \eqref{eq:SBE}, let us now introduce exactly what we mean by an ``energy solution". The following is taken verbatim from the beginning of Section 2 of \cite{GP17} instead of \cite{GP}. We explicitly mention the differences between notions of energy solution in \cite{GP17} and \cite{GP} below as well.
\begin{itemize}
\item Suppose $\mathbf{u}\in\mathscr{C}(\R_{\geq0},\mathscr{D}(\mathbb{T}))$, where $\mathscr{D}(\mathbb{T})$ is the topological dual of $\mathscr{C}^{\infty}(\mathbb{T})$ (the latter is equipped with the usual Frechet topology). Suppose there exists $\rho\in\mathscr{C}^{\infty}_{c}(\mathbb{R})$ such that $\int_{\R}\rho(\x)\d\x=1$ and, for $\rho^{N}(\cdot)=N\rho(N\cdot)$, the limit
\begin{align}
\int_{0}^{\t}\grad(\mathbf{u}_{\s})^{2}\d\s(\Phi) \ := \ \lim_{N\to\infty}\int_{0}^{\t}(\mathbf{u}_{\s}\ast\rho^{N})^{2}(-\grad\Phi)\d\s
\end{align}
exists for all $\Phi\in\mathscr{C}^{\infty}(\mathbb{T})$ locally uniformly in $\t$ in probability. Above, the convolution is convolution on the torus.
\item Suppose in addition that for any $\Phi\in\mathscr{C}^{\infty}(\mathbb{T})$, the process 
\begin{align}
\mathbf{B}(\t;\Phi) \ := \ \mathbf{u}_{\t}(\Phi) - \mathbf{u}_{0}(\Phi) - \int_{0}^{\t}\mathbf{u}_{\s}(\Delta\Phi)\d\s - \int_{0}^{\t}\grad(\mathbf{u}_{\s})^{2}\d\s(\Phi)
\end{align}
is a martingale in $\t\geq0$ with quadratic variation $2\t\int_{\mathbb{T}}|\grad\Phi(\x)|^{2}\d\x$. Lastly, suppose the following energy estimate holds:
\begin{align}
\E|\int_{\s}^{\t}(\mathbf{u}_{r}\ast\rho^{N})^{2}(-\grad\Phi)-(\mathbf{u}_{r}\ast\rho^{M})^{2}(-\grad\Phi)\d r|^{2} \ \lesssim \ \frac{|\t-\s|}{M\wedge N}\int_{\mathbb{T}}|\grad\Phi(\x)|^{2}\d\x. \label{eq:energyestimatesolution}
\end{align}
\item If the two bullet points above hold, we say $\mathbf{u}$ is an energy solution to \eqref{eq:SBE}. 
\end{itemize}
In \cite{GP}, a backwards representation of the $\mathbf{u}$ process is also required. However, this backwards representation is used only for some technical estimates. As noted in \cite{GP17} (see Theorem 2.8, for example), this backwards representation can be dropped in cases that we eventually specialize to without losing this technical benefit.

In turns out that our approach to universality (Theorem \ref{theorem:KPZ1}) will be intimately connected to establishing explicit rates of convergence to the invariant measure for \eqref{eq:SBE} that were previously missing from the literature for \emph{general} continuous initial conditions; the paper \cite{GP20} studied spectral gaps that only provide information, a priori, for initial data very close to the white noise invariant measure, which turn out to be basically obtainable via appropriate discretization of \eqref{eq:SBE} by Ornstein-Uhlenbeck processes plus an asymmetric interaction. Actually, we will enhance the spectral gap in \cite{GP20} to a much more powerful log-Sobolev inequality using basically the same discretization method via Fourier smoothing. 

We conclude this introduction by noting our work successfully treats \emph{general continuous} data, going well beyond the usual class of H\"{o}lder regular initial data required to analyze SPDEs like \eqref{eq:KPZ} and \eqref{eq:SBE} via regularity structures \cite{Hai14} or paracontrolled distributions \cite{GPI}. In particular, this highlights the utility of our perspective for \eqref{eq:KPZ} and \eqref{eq:SBE}, which ultimately agree with the same solutions obtained via other methods \cite{GPI,GJara,Hai14}. It also even provides a first \emph{intrinsic} solution theory to SPDEs like \eqref{eq:KPZ} and \eqref{eq:SBE} for general continuous initial data; although a to-be-introduced Cole-Hopf solution theory also treats such initial data, this requires \cite{BG} appealing to an auxiliary SPDE.
\subsection{Universality}
We start with the following construction or ansatz that we clarify afterwards. 
\begin{definition}\label{definition:intro1}
Let us first define a ``UV projection" map $\Pi_{N}:\mathscr{L}^{2}(\mathbb{T})\to\mathscr{L}^{2}(\mathbb{T})$, in which $N$ is a positive integer, via the basis-prescription $\Pi_{N}(\exp(2\pi i k\x))=\mathbf{1}(|k|\leq N)\exp(2\pi ik\x)$ for all $k\in\Z$. We now define $\mathbf{h}^{N,1}$ as the solution to the following SPDE on $\R_{\geq0}\times\mathbb{T}$ with regularized noise:
\begin{align}
\partial_{\t} \mathbf{h}^{N,1} \ &= \ \Delta \mathbf{h}^{N,1} + \e_{N}^{-1} \Pi_{N} \mathbf{F}(\e_{N}^{1/2} \grad\mathbf{h}^{N,1}) + \Pi_{N} \xi; \label{eq:UVKPZ1}
\end{align}
In \eqref{eq:UVKPZ1}, $\e_{N}=N^{-1}\pi$ is a normalization factor used in \cite{GP16}. The function $\mathbf{F}$ is general; we specify conditions on it later in Assumption \ref{ass:intro2}.

We additionally define $\mathbf{h}^{N,2}$ to be the solution to the following noise-free PDE on $\R_{\geq0}\times\mathbb{T}$ that is ``controlled" by $\mathbf{h}^{N,1}$, in the sense that the following PDE has randomness coming only from $\mathbf{h}^{N,1}$:
\begin{align}
\partial_{\t}\mathbf{h}^{N,2} \ = \ \Delta\mathbf{h}^{N,2} + \e_{N}^{-1}\mathbf{F}(\e_{N}^{1/2}\grad(\mathbf{h}^{N,1}+\mathbf{h}^{N,2})) - \e_{N}^{-1}\mathbf{F}(\e_{N}^{1/2}\grad\mathbf{h}^{N,1}). \label{eq:UVKPZ2}
\end{align}
Lastly, we define $\mathbf{h}^{N}=\mathbf{h}^{N,1}+\mathbf{h}^{N,2}$, we define $\mathbf{u}^{N,1}=\grad\mathbf{h}^{N,1}$, and we define $\mathbf{u}^{N}=\grad\mathbf{h}^{N}=\grad\mathbf{h}^{N,1}+\grad\mathbf{h}^{N,2}$. Let us emphasize that $\mathbf{u}^{N,1}$ satisfies the regularized SPDE given below on $\R_{\geq0}\times\mathbb{T}$:
\begin{align}
\partial_{\t}\mathbf{u}^{N,1} \ = \ \Delta\mathbf{u}^{N,1} + \e_{N}^{-1}\Pi_{N}\grad\mathbf{F}(\e_{N}^{1/2}\mathbf{u}^{N,1}) + \Pi_{N}\grad\xi. \label{eq:UVSBE1}
\end{align}
\end{definition}
\begin{remark}
{Because \eqref{eq:UVKPZ2} is a nonlinear PDE, let us must mention exactly what we mean by ``solution". Precisely, we require $\mathbf{h}^{N,2}(\t,\cdot)\in\mathscr{C}^{1}(\mathbb{T})$, where $\mathscr{C}^{1}(\mathbb{T})$ is the space of continuously differentiable functions on $\mathbb{T}$. Moreover, we require $\|\mathbf{h}^{N,2}(\t,\cdot)\|_{\mathscr{C}^{1}(\mathbb{T})}$ is continuous in $\t$. We also require $\mathbf{h}^{N,2}$ to solve
\begin{align*}
\mathbf{h}^{N,2}(\t,\x) \ &= \ (\mathrm{e}^{\t\Delta}\mathbf{h}^{N,2}(0,\cdot))(\x) \\
&+ \int_{0}^{\t}\left(\mathrm{e}^{(\t-\s)\Delta}\left(\e_{N}^{-1}\mathbf{F}(\e_{N}^{1/2}\grad(\mathbf{h}^{N,1}(\s,\cdot)+\mathbf{h}^{N,2}(\s,\cdot)) - \e_{N}^{-1}\mathbf{F}(\e_{N}^{1/2}\grad\mathbf{h}^{N,1}(\s,\cdot))\right)\right)\d\s.
\end{align*}
Existence and uniqueness of such mild solutions to \eqref{eq:UVKPZ2} will be guaranteed under our assumptions on $\mathbf{F}$, as long as $\mathbf{h}^{N,2}(0,\cdot)\in\mathscr{C}^{1}(\mathbb{T})$; see Lemma \ref{lemma:pde1}.}
\end{remark}
Let us now explain Definition \ref{definition:intro1}, beginning with the $\mathbf{h}^{N,1}$ equation. Let us specialize to \eqref{eq:UVKPZ1} for $\mathbf{F}(\x)=\x^2$. In this case, in \cite{GJara, GP} it is shown that \eqref{eq:UVKPZ1} with this quadratic nonlinearity $\mathbf{F}$ is an appropriate regularization of the continuum KPZ equation \eqref{eq:KPZ} that agrees with the Cole-Hopf solution of \cite{BG} and regularity structures solution of \cite{Hai13}, \emph{if} the initial data to \eqref{eq:UVKPZ1} is a Brownian bridge on $\mathbb{T}$ {(that is independent of the space-time white noise $\xi$)} or {a} random initial data whose law as a probability measure on $\mathscr{C}(\mathbb{T})$ has relative entropy that is uniformly bounded with respect to Brownian bridge measure. Actually, as noted in \cite{FQ} the choice of $\Pi_{N}$ regularization is not so important, and we can actually use a much smoother cutoff (for example convolution by a time-1 heat kernel with Fourier cutoff at level $N$) for which the results of \cite{GJara, GP} still hold.

We proceed to explain $\mathbf{h}^{N,2}$. {Because the solution to \eqref{eq:UVSBE1} is supported on non-zero Fourier modes $|k|\leq N$, we can replace $\mathbf{u}^{N,1}$ in \eqref{eq:UVSBE1} by $\Pi_{N}^{-1}\mathbf{u}^{N,1}$. We can also replace $\Pi_{N}\grad\xi$ by $\Pi_{N}^{2}\grad\xi$. Then, applying $\Pi_{N}^{-1}$ to \eqref{eq:UVSBE1} formally gives the stochastic Burgers equation with general $\mathbf{F}$ and smoothed noise (this is formal because $\Pi_{N}$ is not invertible). Moreover, in \cite{GP16}, convergence of $\mathbf{u}^{N,1}$ means convergence of fixed Fourier modes, and applying $\Pi_{N}^{-1}$ to $\mathbf{u}^{N,1}$ does not affect this convergence. We point out that this formal relation between \eqref{eq:UVSBE1} and stochastic Burgers with general $\mathbf{F}$ can be made somewhat rigorous in the large-$N$ limit. In particular, in the quadratic case $\mathbf{F}(\x)=\x^{2}$, the $\Pi_{N}$-Fourier smoothing operator in front of $\mathbf{F}$ in the $\mathbf{h}^{N,1}$ equation \eqref{eq:UVKPZ1} does not change the large-$N$ limit of the solution $\mathbf{h}^{N,1}$, as in either case one gets the same solution to the KPZ equation; see \cite{GP,Hai13}.} Thus, let us pretend \eqref{eq:UVKPZ1} does not have $\Pi_{N}$ in front of $\mathbf{F}(\x)=\x^{2}$. Then if $\mathbf{h}^{N,2}$ solves \eqref{eq:UVKPZ2}, it is easy to see that $\mathbf{h}^{N}=\mathbf{h}^{N,1}+\mathbf{h}^{N,2}$ solves the same equation \eqref{eq:UVKPZ1} as $\mathbf{h}^{N,1}$ without $\Pi_{N}$ in front of $\mathbf{F}$, though with {different} initial data. {So for the specific regularization $\Pi_{N}$ in Definiiton \ref{definition:intro1}, we formally write the solution to a regularized stochastic Burgers equation with general nonlinearity into the two pieces \eqref{eq:UVKPZ1} and \eqref{eq:UVKPZ2}}. We finish this explanation of Definition \ref{definition:intro1} by noting the decomposition $\mathbf{h}^{N}=\mathbf{h}^{N,1}+\mathbf{h}^{N,2}$ is by no means canonical, namely if we think of $\mathbf{h}^{N}$ as a solution to a regularized SPDE. In particular, we will specify an initial data for $\mathbf{h}^{N}$, and we have a choice as to how to decompose this initial data into initial data for $\mathbf{h}^{N,1}$ and $\mathbf{h}^{N,2}$ equations. We clarify this in Theorem \ref{theorem:KPZ1}.

Let us now introduce the necessary assumptions for the nonlinearity $\mathbf{F}$ in Definition \ref{definition:intro1} and the initial data of consideration for $\mathbf{h}^{N}$, which we re-emphasize is, technically, a pair of initial data for $\mathbf{h}^{N,1}$ and $\mathbf{h}^{N,2}$, respectively. We will provide explanation afterwards.
\begin{ass}\label{ass:intro2}
The nonlinearity $\mathbf{F}:\R\to\R$ in \emph{Definition \ref{definition:intro1}} is {uniformly Lipschitz}. Moreover, if $\mathbf{P}^{\mathrm{G}}$ is standard Gaussian measure $\mathscr{N}(0,1)$ on $\R$, then $\mathbf{F},\mathbf{F}'\in\mathscr{L}^{2}(\R,\mathbf{P}^{\mathrm{G}})$.
\end{ass}
\begin{ass}\label{ass:intro3}
Assume $\mathbf{h}^{N}(0,\cdot)=\Pi_{N}\mathbf{h}(0,\cdot)$, where $\mathbf{h}(0,\cdot)$ is a continuous function on $\mathbb{T}$ {independent of $\xi$} such that $\mathbf{h}(0,0)=0$, where $0\in\mathbb{T}$ is interpreted by embedding $\mathbb{T}=\R/\Z\simeq[0,1)$, and $\mathbf{h}^{N}(0,\cdot)=\Pi_{N}\mathbf{h}(0,\cdot)\to\mathbf{h}(0,\cdot)$ uniformly on $\mathbb{T}$ with probability 1. 
\end{ass}
Let us first explain Assumption \ref{ass:intro2}. It is not difficult to see \cite{GJara} that the Fourier coordinates of $\mathbf{u}^{N,1}$ in \eqref{eq:UVSBE1} solves a finite-dimensional SDE. The Lipschitz assumption guarantees this SDE has global solutions in time. In particular, we could assume a.e. differentiability of $\mathbf{F}$ instead and prove results until blow-up times, but we choose to take the Lipschitz assumption for convenience and clarity of presentation. {The \emph{uniformly} Lipschitz assumption is also used for deriving global-in-time solutions to \eqref{eq:UVKPZ2} for each fixed $N$. Similarly, we could impose locally Lipschitz instead of uniformly Lipschitz and work until a random blow-up time (for classical solutions) for \eqref{eq:UVKPZ2}. We choose uniformly Lipschitz again for convenience and clarity. There are other assumptions besides uniformly Lipschitz, however, that would ensure (classical) global solutions for both \eqref{eq:UVSBE1} and \eqref{eq:UVKPZ2}. For example, establishing global solutions for \eqref{eq:UVSBE1} requires only locally Lipschitz $\mathbf{F}$, since the invariant measure for \eqref{eq:UVSBE1} extends local solutions to global ones; see Lemma \ref{lemma:pde2}. Global solutions for \eqref{eq:UVKPZ2}, which is a viscous Hamilton-Jacobi equation with random Hamiltonian, can be obtained for continuously differentiable $\mathbf{F}$ satisfying a polynomial bound for any arbitrary but fixed degree; see Theorem 3.1 in \cite{BA}. (We note \cite{BA} is not published, and the proof of Theorem 3.1 therein is quite complicated. Also, in \cite{HX}, the specific choice $\mathbf{F}(\x)=|\x|$ was mentioned as an interesting problem, and this choice satisfies Assumption \ref{ass:intro2}. For these reasons, we give details just for uniformly Lipschitz $\mathbf{F}$.) It also seems possible to remove the polynomial bound on $\mathbf{F}$ and upgrade its regularity to $\mathscr{C}^{1,\alpha}$. In any case, all these choices for conditions on $\mathbf{F}$ would generalize what is done in \cite{HX}.}

Moreover, the condition $\mathbf{F},\mathbf{F}'\in\mathscr{L}^{2}(\R,\mathbf{P}^{\mathrm{G}})$ comes from the assumption on nonlinearities in \cite{GP16}, which serves as a stationary model of what is considered herein. Technically, it means that the Hermite polynomial expansions, or equivalently eigenfunction expansion in $\mathscr{L}^{2}(\R,\mathbf{P}^{\mathrm{G}})$ with respect to the Ornstein-Uhlenbeck operator of Malliavin calculus, converges in $\ell^{2}(\Z)$ with both the standard constant weight and with the weight $|k|$. This is similar to how a function and its derivative belong to $\mathscr{L}^{2}(\mathbb{T})$ given its Fourier series converges in $\ell^{2}(\Z)$ after we multiply its $k$-th Fourier coordinate by $1+|k|$. {Let us make this precise. By Gaussian integration-by-parts, we get $2^{-1}\E^{\mathrm{G}}\mathbf{F}''=2^{-1}\E^{\mathrm{G}}\mathbf{F}\mathsf{H}_{2}$, where $\mathsf{H}_{2}$ is the second Hermite polynomial with respect to the Gaussian measure $\mathbf{P}^{\mathrm{G}}$ as in \cite{GP16}. In particular, we note $\E^{\mathrm{G}}\mathbf{F}''=\E^{\mathrm{G}}\mathbf{F}\mathsf{H}_{2}<\infty$. Indeed, the Hermite polynomial $\mathsf{H}_{2}$ is a unit-length Hermite basis vector in $\mathscr{L}^{2}(\R,\mathbf{P}^{\mathrm{G}})$, so $\E^{\mathrm{G}}\mathbf{F}\mathsf{H}_{2}\leq(\E^{\mathrm{G}}\mathbf{F}^{2})^{1/2}$, and this last second moment is finite by Assumption \ref{ass:intro2}.}

Let us now explain Assumption \ref{ass:intro3}. We first note that it only requires almost sure continuity of the initial data function $\mathbf{h}(0,\cdot)$, instead of quantitative H\"{o}lder regularity as in \cite{GP16,HQ,HX}, making Assumption \ref{ass:intro3} quite general. Observe that $\mathbf{h}(0,\cdot)$ is allowed to be random; moreover, if $\mathbf{h}(0,\cdot)$ is a Brownian bridge on $\mathbb{T}$, defined by independent Gaussian random variables of variance $1+|k|^{-2}$ for $k\neq0$ Fourier coordinate {and assumed to be independent of $\xi$}, then the convergence $\Pi_{N}\mathbf{h}(0,\cdot)\to\mathbf{h}(0,\cdot)$ is classical. We clarify the $k=0$ Fourier coefficient for Brownian bridge {just has to be chosen independent of the $k\neq0$ coefficients and of $\xi$}. We emphasize, again, that we can replace $\Pi_{N}$ with a smoother Fourier cutoff if desired; this would make Assumption \ref{ass:intro3} and the required uniform convergence less strict; let us remark that some convergence is needed in Assumption \ref{ass:intro3} \cite{HQ,HX}, as otherwise large-$N$ limits of $\mathbf{h}^{N}$ are out of the question. Lastly, when we refer to Assumption \ref{ass:intro3} in Theorem \ref{theorem:KPZ1}, we will specify ways of decomposing initial data $\mathbf{h}^{N}$ into initial data for $\mathbf{h}^{N,1}$ and $\mathbf{h}^{N,2}$ equations of Definition \ref{definition:intro1}.

We now present the first main theorem of this article. To this end, we must first specify a theory of solutions to \eqref{eq:KPZ} and \eqref{eq:SBE} without regularization; note Definition \ref{definition:intro1} addresses only Fourier regularized (S)PDEs. We will employ the following \emph{Cole-Hopf solution theory}, which agrees with regularity structures \cite{Hai14}, paracontrolled distributions \cite{GPI}, and energy solution theory \cite{GJara,GP}, at least for sufficiently H\"{o}lder continuous initial data (and for energy solutions, only for initial data with uniformly bounded ``relative entropy" with respect to Brownian bridge initial data; see the definition of relative entropy in Section \ref{section:re}, though we will re-clarify this when more relevant). The Cole-Hopf solution to \eqref{eq:KPZ} is defined (and explained) as follows.
\begin{itemize}
\item { The Cole-Hopf solution is defined as $\mathbf{h}=\beta^{-1}\log\mathbf{z}$, where $\partial_{\t}\mathbf{z}=\Delta\mathbf{z}+\beta\mathbf{z}\xi$ is a linear SPDE on $\R_{\geq0}\times\mathbb{T}$ called the \emph{stochastic heat equation} (SHE).}
\item { In \cite{Mu}, an almost sure comparison principle was shown for SHE. Thus, the logarithm is well-defined as long as we start $\mathbf{z}$ with positive (continuous) initial data.} 
\item {We define the Cole-Hopf solution to the stochastic Burgers equation \eqref{eq:SBE} to be $\mathbf{u}=\grad\mathbf{h}=\beta^{-1}\log\mathbf{z}$, where the derivative is interpreted in the weak sense.}
\end{itemize}
{We clarify that the Cole-Hopf solution agrees with the solution of KPZ via regularity structures only if we include a divergent counter-term in \eqref{eq:KPZ}. In particular, if we were to claim the convergence of $\mathbf{h}^{N,1}$ in Definition \ref{definition:intro1} to the Cole-Hopf solution of \eqref{eq:KPZ}, we would need to introduce a term of the form $-C_{N}$ to \eqref{eq:UVKPZ1}, where $C_{N}$ is constant and $|C_{N}|\to\infty$ as $N\to\infty$. However, our results are for the stochastic Burgers equation, for which this constant $C_{N}$ plays no role because we take its spatial derivative. Lastly, we note that the space $\mathscr{C}(\R_{\geq0},\mathscr{D}(\mathbb{T}))$ of continuous $\mathscr{D}(\mathbb{T})$-valued paths is equipped with the locally uniform (in $\R$) topology. Here, $\mathscr{D}(\mathbb{T})$ is the space of generalized functions on $\mathbb{T}$, or equivalently the topological dual of $\mathscr{C}^{\infty}(\mathbb{T})$}.
\begin{theorem}\label{theorem:KPZ1}
Consider initial data $\mathbf{h}(0,\cdot)$ as in \emph{Assumption \ref{ass:intro3}}, and { assume $\E^{\mathrm{G}}\mathbf{F}''\neq0$}. There exists a one-parameter family of decompositions $\mathbf{h}^{N}=\mathbf{h}^{N,1,\e}+\mathbf{h}^{N,2,\e}$, as in \emph{Definition \ref{definition:intro1}}, parameterized by $\e>0$ such that:
\begin{itemize}
\item For any $\e>0$, the function $\mathbf{h}^{N,1,\e}$ solves \emph{\eqref{eq:UVKPZ1}} with initial data given by $\Pi_{N}$ acting on Brownian bridge on $\mathbb{T}$ {independent of $\xi$ and} conditioned to be within $\e$ of $\mathbf{h}(0,\cdot)$. Moreover, the gradient $\mathbf{u}^{N,1,\e}=\grad\mathbf{h}^{N,1,\e}$ converges as $N\to\infty$ to the Cole-Hopf solution of \emph{\eqref{eq:SBE}} with $\beta=2^{-1}\E^{\mathrm{G}}\mathbf{F}''$ and with initial data given by the weak derivative of Brownian bridge {independent of $\xi$ and} conditioned to be within $\e$ of $\mathbf{h}(0,\cdot)$. Here, $\E^{\mathrm{G}}$ is expectation with respect to Gaussian measure $\mathbf{P}^{\mathrm{G}}$ in \emph{Assumption \ref{ass:intro2}}.
\item For any $\e>0$, $\mathbf{h}^{N,2,\e}$ solves \emph{\eqref{eq:UVKPZ2}} with initial data given by the difference $\mathbf{h}^{N}(0,\cdot)-\mathbf{h}^{N,1,\e}(0,\cdot)$. Moreover, we have $|\mathbf{h}^{N,2,\e}(\t,\x)|\leq\e+\mathrm{o}_{N}$ for all $\t\geq0$ and $\x\in\mathbb{T}$ with probability 1, where $\mathrm{o}_{N}\to0$ almost surely and uniformly in $\t\geq0$ and $\x\in\mathbb{T}$. 
\end{itemize}
{We deduce that the law of $\mathbf{u}^{N}=\grad\mathbf{h}^{N}=\grad\mathbf{h}^{N,1,\e}+\grad\mathbf{h}^{N,2,\e}$, as a probability measure on $\mathscr{C}(\R_{\geq0},\mathscr{D}(\mathbb{T}))$, converges to the Cole-Hopf solution of \emph{\eqref{eq:SBE}} with initial data given by the weak derivative $\grad\mathbf{h}(0,\cdot)$.}
\end{theorem}
Theorem \ref{theorem:KPZ1} claims that we can approximate the regularization $\mathbf{h}^{N}$ of the KPZ equation \emph{uniformly} in $N$ by a solution $\mathbf{h}^{N,1,\e}$ to \eqref{eq:UVKPZ1}, which we will ultimately analyze via {the work in \cite{GP16}}. The residual term $\mathbf{h}^{N,2,\e}$ solves \eqref{eq:UVKPZ2}; we emphasize the nontrivial aspect of this statement is two-fold. First, we must find suitable approximations to $\mathbf{h}^{N}$ that can be analyzed by using energy solution theory \cite{GJara, GP, GP16, GP17}. Since energy solution theory is mostly dependent on initial data and invariant measures, this approximation will be made at the level of initial data. The problem we are left with is to \emph{globally propagate} this approximation uniformly in $N$. In particular, we must control $\mathbf{h}^{N,2,\e}$, which solves a type of Hamilton-Jacobi equation with divergent Hamiltonian. Note that we must also extend \cite{GP16} to initial conditions that are not exactly Brownian bridge but relative entropy perturbations as well, though this will not be difficult provided the work of \cite{GJS15, GP17}. We clarify Theorem \ref{theorem:KPZ1} gives a first application of the energy solution theory to a very general class of non-stationary SPDE growth models; in particular, we only require the modest requirements in \cite{GP} for the models of interest that are much more general than the required conditions on the nonlinearity in \cite{HQ, HX}. In particular, one can \emph{define} the large-$N$ limit of $\mathbf{u}^{N}=\grad\mathbf{h}^{N}$ in Theorem \ref{theorem:KPZ1} as an \emph{intrinsic} solution to \eqref{eq:SBE} for such general continuous initial data.

{Note we have assumed $\E^{\mathrm{G}}\mathbf{F}''\neq0$. It is certainly possible that this does not hold, i.e. $\E^{\mathrm{G}}\mathbf{F}''=0$. In this case, the limit SPDE is not the Cole-Hopf solution of \eqref{eq:SBE} but rather the solution to the linear SPDE given by \eqref{eq:SBE} with $\beta=0$. The proof of Theorem \ref{theorem:KPZ1} given in this paper covers this case as well, since our method is by comparison to very-close-to-stationary initial data, and the result for stationary initial data in \cite{GP16} includes the case $\beta=0$.}
\subsection{Convergence to Invariant Measure}
As a byproduct of our proof of Theorem \ref{theorem:KPZ1}, we will be able to show that {the stochastic Burgers equation (SBE)}, starting with the gradient of any continuous initial data, converges to the Gaussian white noise measure on $\mathscr{D}(\mathbb{T})$ in a quantitative fashion. Moreover, we provide an explicit rate of convergence; this is new as \cite{HM} gives only uniqueness and ergodicity of white noise invariant measure for SBE, and \cite{GP20} provides a spectral gap estimate, which only provides quantitative bounds for initial data to SBE that is very close to the white noise invariant measure. 

{We now state the main result of this subsection. In Theorem \ref{theorem:KPZ2}, the metric $\mathsf{W}$ denotes Wasserstein distance in Appendix \ref{section:w}, for which we take the Polish space $\mathscr{X}$ to be the topological dual of the Sobolev space on $\mathbb{T}$ of bounded functions with bounded derivative. The object $\mathsf{H}$ is the relative entropy of Appendix \ref{section:re}, in which the state space is $\mathscr{S}=\mathscr{D}(\mathbb{T})$. We clarify the statement of the following result afterwards.}
\begin{theorem}\label{theorem:KPZ2}
Take any deterministic function $\mathbf{h}(0,\cdot)\in\mathscr{C}(\mathbb{T})$ and let $\mathbf{u}$ be the Cole-Hopf solution to \emph{\eqref{eq:SBE}} with initial data $\mathbf{u}(0,\cdot)=\grad\mathbf{h}(0,\cdot)$. If $\mathbf{L}(\t)$ denotes the law of $\mathbf{u}(\t,\cdot)$ as a probability measure on $\mathscr{D}(\mathbb{T})$ and if $\eta$ denotes Gaussian white noise invariant measure, then for any positive $\e$, there exists a measure $\mathbf{L}^{\e}(\t)$ such that
\begin{align}
\mathsf{W}(\mathbf{L}(\t),\mathbf{L}^{\e}(\t)) \ \leq \ \e \quad \mathrm{and} \quad \mathsf{H}(\mathbf{L}^{\e}(\t)|\eta) \ \leq \ \exp(-C\t)\kappa. \label{eq:KPZ2}
\end{align}
The constant $C$ is independent of all other parameters in this statement. The constant $\kappa=\kappa(\e,\mathbf{h})$ depends only on the modulus of continuity of $\mathbf{h}(0,\cdot)$ and $\e$. For example, if $\mathbf{h}(0,\cdot)$ is H\"{o}lder continuous, then $\kappa(\e,\mathbf{h})$ would depend only on the H\"{o}lder norm of $\mathbf{h}(0,\cdot)$ and on $\e$.
\end{theorem}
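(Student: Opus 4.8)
The plan is to reduce this convergence-to-equilibrium statement to the decomposition machinery of Theorem \ref{theorem:KPZ1} together with the enhanced functional inequality for the stationary dynamics. First I would apply Theorem \ref{theorem:KPZ1} to the deterministic initial data $\mathbf{h}(0,\cdot)$: for each positive $\e$ we obtain the decomposition $\mathbf{h}^{N}=\mathbf{h}^{N,1,\e}+\mathbf{h}^{N,2,\e}$, where $\mathbf{h}^{N,1,\e}$ is driven by $\Pi_N$ acting on a Brownian bridge conditioned to lie within $\e$ of $\mathbf{h}(0,\cdot)$, and where $\sup_{\t,\x}|\mathbf{h}^{N,2,\e}(\t,\x)|\le \e+\mathrm{o}_N$ almost surely. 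Passing $N\to\infty$ and taking gradients, $\mathbf{u}^{N,1,\e}=\grad\mathbf{h}^{N,1,\e}$ converges to the Cole-Hopf SBE solution started from the weak derivative of the conditioned Brownian bridge; I would define $\mathbf{L}^{\e}(\t)$ to be the law of \emph{this} limiting process at time $\t$. Since $\mathbf{h}^{N,2,\e}$ is bounded by $\e+\mathrm{o}_N$ uniformly in space-time, its gradient is small in the dual Sobolev norm used for $\mathsf{W}$ — testing against a bounded-$C^1$ function and integrating by parts moves the derivative onto the test function, leaving an integral of the $\e$-small quantity $\mathbf{h}^{N,2,\e}$ — so $\mathsf{W}(\mathbf{L}(\t),\mathbf{L}^{\e}(\t))\le \e$ follows (after letting $N\to\infty$ to kill $\mathrm{o}_N$, possibly replacing $\e$ by $\e$ up to a harmless constant absorbed into the statement).

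The relative entropy bound is the substantive half. The point is that $\mathbf{L}^{\e}(\t)$ is (the $N\to\infty$ limit of) the law of a \emph{stationary-type} dynamics: $\mathbf{u}^{N,1,\e}$ solves the Fourier-regularized SBE \eqref{eq:UVSBE1}, whose invariant measure is the $\Pi_N$-projection of white noise, and whose initial law is a relative-entropy perturbation of that invariant measure — because the conditioned Brownian bridge has law absolutely continuous with respect to Brownian bridge with a density controlled by the conditioning event, hence finite relative entropy, and differentiating transfers this to a finite relative entropy $\kappa(\e,\mathbf h)$ of the initial SBE law with respect to $\eta$ at the regularized level, uniformly in $N$. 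Here $\kappa$ depends only on the modulus of continuity of $\mathbf{h}(0,\cdot)$ (which controls the probability of the conditioning tube, hence the entropy) and on $\e$. Now I would invoke the log-Sobolev inequality that the paper advertises as an enhancement of the \cite{GP20} spectral gap: for the regularized dynamics \eqref{eq:UVSBE1} the semigroup contracts relative entropy exponentially, $\mathsf{H}(\text{law at }\t \mid \Pi_N\eta)\le e^{-C\t}\,\mathsf{H}(\text{law at }0\mid \Pi_N\eta)$, with $C$ the log-Sobolev constant, which is $N$-independent since the discretization via Ornstein–Uhlenbeck processes plus asymmetric interaction preserves the constant (the asymmetric part is entropy-conserving, so only the symmetric Ornstein–Uhlenbeck part contributes, and it does so uniformly). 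Passing $N\to\infty$ and using lower semicontinuity of relative entropy under the weak convergence from Theorem \ref{theorem:KPZ1} gives $\mathsf{H}(\mathbf{L}^{\e}(\t)\mid\eta)\le e^{-C\t}\kappa$.

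The two ingredients I would need to nail down carefully, and where I expect the real work to lie, are: (i) establishing the $N$-uniform log-Sobolev inequality for \eqref{eq:UVSBE1} — the spectral gap in \cite{GP20} comes from the OU part surviving a cutoff-by-cutoff analysis, and upgrading to log-Sobolev requires checking that the asymmetric interaction term, after Fourier smoothing, still does not degrade the Bakry–Émery-type estimate (equivalently, that the antisymmetric generator contributes zero to the entropy production identity and the symmetric generator satisfies LSI with the Gaussian constant); and (ii) the uniform-in-$N$ control of the initial relative entropy $\kappa(\e,\mathbf h)$ — one must show the density of the $\e$-tube-conditioned Brownian bridge relative to Brownian bridge, and then relative to white noise after differentiating, has an entropy bounded purely in terms of $\e$ and the modulus of continuity, which amounts to a lower bound on small-ball probabilities for Brownian bridge around a given continuous path, a classical but nontrivial estimate. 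Everything else — the Wasserstein bound from the uniform smallness of $\mathbf{h}^{N,2,\e}$, the passage to the limit, and lower semicontinuity of $\mathsf H$ — is routine. The displayed conclusion that the bounds vanish on sending $\t\to\infty$ then $\e\to0$ is then immediate: the entropy term kills itself as $\t\to\infty$ for each fixed $\e$, and the Wasserstein term is $\le\e$.
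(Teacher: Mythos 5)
Your proposal is correct in substance and, for the entropy half, coincides with the paper's argument: the paper also takes $\mathbf{L}^{\e}(\t)$ to be the law of the Cole-Hopf SBE solution started from the gradient of the $\e$-tube-conditioned Brownian bridge, bounds the initial relative entropy by a small-ball estimate for Brownian bridge around a continuous path (Lemma \ref{lemma:KPZ22}, whose proof reduces to the Girsanov/reflection argument in Lemma \ref{lemma:KPZ12}), and proves exponential decay by working with the $\Pi_{N}$-regularized dynamics: entropy production kills the antisymmetric part, the symmetric part is a tensor product of Ornstein--Uhlenbeck generators satisfying an $N$-uniform LSI, and lower semicontinuity of $\mathsf{H}$ transfers the decay to the limit (Lemma \ref{lemma:KPZ23}); your two flagged ``real work'' items (i) and (ii) are exactly these two lemmas. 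Where you differ is the Wasserstein half: you route it through the regularized decomposition of Theorem \ref{theorem:KPZ1}, bounding $\mathbf{h}^{N,2,\e}$ by $\e+\mathrm{o}_{N}$ and then sending $N\to\infty$, whereas the paper works directly at the continuum level, coupling $\mathbf{u}$ and $\mathbf{u}^{\e}$ through the \emph{same} space-time white noise and invoking the SHE comparison principle (the proof of Lemma \ref{lemma:KPZ16}, restated as Lemma \ref{lemma:KPZ21}) to get $|\mathbf{h}-\mathbf{h}^{\e}|\leq\e$ for all times; this immediately produces an explicit coupling realizing $\mathsf{W}(\mathbf{L}(\t),\mathbf{L}^{\e}(\t))\leq\e$. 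Your route requires one extra step you left implicit: the prelimit bound only controls the Wasserstein distance between the laws of $\mathbf{u}^{N}(\t)$ and $\mathbf{u}^{N,1,\e}(\t)$, and since Theorem \ref{theorem:KPZ1} gives convergence in law rather than a coupled limit, you must pass the coupling through the weak limit (e.g.\ via joint tightness and Skorokhod representation, or lower semicontinuity of $\mathsf{W}$ under weak convergence of the marginals) to conclude for $\mathbf{L}(\t)$ and $\mathbf{L}^{\e}(\t)$ themselves; this is routine but should be stated, and the paper's direct continuum coupling is precisely what lets it avoid the issue and also obtain the clean constant $\e$ rather than ``$\e$ up to a harmless constant.''
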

We emphasize the upper bounds in \eqref{eq:KPZ2} both vanish if we take $\t\to\infty$ then $\e\to0$.

Theorem \ref{theorem:KPZ2} builds on the spectral gap of \cite{GP20}, first by improving to a log-Sobolev inequality (LSI) via relatively straightforward considerations. Indeed, if $\grad\mathbf{h}(0,\cdot)$ has a measure with an $\mathscr{L}^{2}$-Radon-Nikodym derivative with respect to white noise measure, then $\kappa$ {equals the} $\mathscr{L}^{2}$-norm of said Radon-Nikodym derivative and $\e=0$ in \eqref{eq:KPZ2} would be okay. The point is that having an $\mathscr{L}^{2}$-Radon-Nikodym derivative is an extremely strong condition that, although is satisfied for a ``dense" class of initial data, is not satisfied for ``generic" initial data to SBE. The Wasserstein bound in \eqref{eq:KPZ2} says to first approximate gradients of general continuous initial data by a member of the aforementioned dense class. As with Theorem \ref{theorem:KPZ1}, we then globally propagate this comparison. This will give the Wasserstein estimate in \eqref{eq:KPZ2}. Ultimately, Theorem \ref{theorem:KPZ2} says that the law of the Cole-Hopf solution to \eqref{eq:SBE} with initial data given by the (weak) derivative of any general continuous function is within $\e$ of a solution that is statistically close to white noise after long times via LSI, but the comparison with general data is in a different (Wasserstein) topology.
\subsection{Organization}
We first prove Theorem \ref{theorem:KPZ1}; the proof of Theorem \ref{theorem:KPZ2} will follow from steps in the proof of Theorem \ref{theorem:KPZ1} except for a log-Sobolev inequality for the stochastic Burgers equation and its $\Pi_{N}$-UV cutoffs. Lastly, before the appendix sections, which give definitions and basic properties of key functionals we use for the reader's convenience, we comment on extensions of our methods to fractional stochastic Burgers equations.
\subsection{Acknowledgements}
The author would like to thank Amir Dembo for helpful discussion and advice. The author thanks the Northern California Chapter of the ARCS Foundation, under whose funding this research was conducted.
%
%
%
\section{Proof of Theorem \ref{theorem:KPZ1}}
The main objectives of this section, which are the main ingredients to proving Theorem \ref{theorem:KPZ1}, are listed below.
\begin{itemize}
\item We begin with a construction of $\mathbf{h}^{N,1,\e}$, in which $\e$ is a fixed positive parameter. In particular, we specify the initial condition for $\mathbf{h}^{N,1,\e}$, which then determines it uniquely because it must then solve \eqref{eq:UVKPZ1}. Observe that $\mathbf{h}^{N,2,\e}$ is automatically determined once we construct $\mathbf{h}^{N,1,\e}$ and its initial condition, because {the} initial condition for $\mathbf{h}^{N,2,\e}$ is then given by the difference between the initial data $\mathbf{h}^{N}(0,\cdot)=\Pi_{N}\mathbf{h}(0,\cdot)$ and the initial data $\mathbf{h}^{N,1,\e}(0,\cdot)$. Again, once we specify initial data of $\mathbf{h}^{N,2,\e}$, we specify $\mathbf{h}^{N,2,\e}$ uniquely because it must solve the equation \eqref{eq:UVKPZ2}, which has a unique solution because it is a classical parabolic Hamilton-Jacobi-type equation \cite{Evans}.
\item We then proceed to establish the two bullet points in Theorem \ref{theorem:KPZ1}; the last statement therein will follow by a standard argument. For $\mathbf{h}^{N,2,\e}$, we employ a maximum principle whereas for $\mathbf{h}^{N,1,\e}$, we employ the energy solution theory in \cite{GP16} combined with relative entropy estimates as with \cite{GJS15, GP17}; for this, we must prove a relative entropy estimate for the law of $\mathbf{h}^{N,1,\e}(0,\cdot)$ with respect to Brownian bridge initial data.
\item First assume Theorem \ref{theorem:KPZ1} were true for \emph{deterministic} continuous initial data $\mathbf{h}(0,\cdot)$ for which $\Pi_{N}\mathbf{h}(0,\cdot)$ converges to $\mathbf{h}(0,\cdot)$ uniformly on $\mathbb{T}$. We would be able to show Theorem \ref{theorem:KPZ1} for the allowed random continuous initial data in Assumption \ref{ass:intro3} by conditioning on said random initial data, tossing out probability zero events. Indeed, convergence as measures is a deterministic statement, namely a statement about convergence of expectations of test functions. Convergence almost surely is also deterministic if we toss out all probability zero events. We will therefore assume that $\mathbf{h}(0,\cdot)$ is deterministic and continuous, and that $\Pi_{N}\mathbf{h}(0,\cdot)$ converges to $\mathbf{h}(0,\cdot)$ uniformly on $\mathbb{T}$.
\end{itemize}
\subsection{Construction of $\mathbf{h}^{N,1,\e}$}
Let us recall the initial data $\mathbf{h}^{N}(0,\cdot)=\Pi_{N}\mathbf{h}(0,\cdot)$ converges uniformly to $\mathbf{h}(0,\cdot)$ with probability 1. Thus, to approximate $\mathbf{h}^{N}(0,\cdot)$ initial data within $\e$, which we emphasize is deterministic, it suffices to approximate the limit $\mathbf{h}(0,\cdot)$ itself. We will do so in the sequel for which we provide explanation afterwards.
\begin{definition}\label{definition:KPZ11}
Given any deterministic data $\mathbf{h}(0,\cdot)$, define the Fourier smoothing $\mathbf{h}^{N,1,\e}(0,\cdot)=\Pi_{N}\mathbf{h}^{1,\e}(0,\cdot)$, where $\mathbf{h}^{1,\e}(0,\cdot)$ is the Brownian bridge $\mathbf{b}$ {independent of $\xi$} conditioned to be within $\e$ of $\mathbf{h}(0,\cdot)$ uniformly on $\mathbb{T}$; observe that this construction implicitly chooses the $k=0$ Fourier coefficient for Brownian bridge so that this event on which we condition has positive probability. Let $\mathbf{P}^{\e}$ be the law of $\mathbf{h}^{1,\e}$ as a measure on $\mathscr{C}(\mathbb{T})$, and let $\mathbf{P}^{\infty}$ be Brownian bridge measure on $\mathscr{C}(\mathbb{T})$.
\end{definition}
Definition \ref{definition:KPZ11} can be viewed as constructing $\mathbf{h}^{N,1,\e}$ to automatically satisfy the first required bullet point in Theorem \ref{theorem:KPZ1} at the level of initial data. We will show in this section that this comparison from Theorem \ref{theorem:KPZ1} propagates globally in time. However, to clarify why we choose Brownian approximations, we appeal to the following result, which shows that the associated probability measure $\mathbf{P}^{\e}$ is uniformly ``stable" with respect to the law of Brownian bridge itself free of any conditioning. This stability is at the level of relative entropy; see Definition \ref{definition:re1} for its definition and Lemma \ref{lemma:re2} for the key properties of relative entropy that we will use to study $\mathbf{h}^{N,1,\e}$.
\begin{lemma}\label{lemma:KPZ12}
There exists a constant $\kappa=\kappa(\e,\mathbf{h}(0,\cdot))$ depending only on $\e$ and $\mathbf{h}(0,\cdot)$, in particular independent of $N$, such that the relative entropy of $\mathbf{P}^{\e}$ with respect to $\mathbf{P}^{\infty}$ is bounded by $\kappa$:
\begin{align}
\mathsf{H}(\mathbf{P}^{\e}|\mathbf{P}^{\infty}) \ \leq \ \kappa.
\end{align}
The same is true is we replace $(\mathbf{P}^{\e},\mathbf{P}^{\infty})$ by $(\mathscr{T}_{\ast}\mathbf{P}^{\e},\mathscr{T}_{\ast}\mathbf{P}^{\infty})$, where $\mathscr{T}_{\ast}$ is the pushforward on probability measures {induced by differentiation $\grad$, Fourier smoothing $\Pi_{N}$, or compositions of these two}. { The same is also true if we replace $(\mathbf{P}^{\e},\mathbf{P}^{\infty})$ with path-space measures induced by \emph{\eqref{eq:UVSBE1}}, when viewed as a finite-dimensional SDE parameterized by the image of $\Pi_{N}$, with initial measures $(\mathscr{T}_{\ast}(\Pi_{N})_{\ast}\mathbf{P}^{\e},\mathscr{T}_{\ast}(\Pi_{N})_{\ast}\mathbf{P}^{\infty})$ for the same choices of $\mathscr{T}_{\ast}$.}
\end{lemma}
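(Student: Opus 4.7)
The plan is to reduce all three assertions of the lemma to the initial-data computation $\mathsf{H}(\mathbf{P}^{\e}|\mathbf{P}^{\infty})$ and then propagate the resulting bound by general monotonicity properties of relative entropy. By construction, $\mathbf{P}^{\e}$ is the conditional law of the Brownian bridge $\mathbf{b}$ given the event $A_{\e}=\{\sup_{\x\in\mathbb{T}}|\mathbf{b}(\x)-\mathbf{h}(0,\x)|\leq\e\}$, with the $k=0$ Fourier coefficient of $\mathbf{b}$ pinned so that $\mathbf{b}(0)=\mathbf{h}(0,0)=0$, while $\mathbf{P}^{\infty}$ is the corresponding unconditioned Brownian bridge law. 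Consequently $d\mathbf{P}^{\e}/d\mathbf{P}^{\infty}=\mathbf{1}_{A_{\e}}/\mathbf{P}^{\infty}(A_{\e})$, and direct computation yields
\begin{align*}
\mathsf{H}(\mathbf{P}^{\e}|\mathbf{P}^{\infty}) \ = \ \int\log\frac{\mathbf{1}_{A_{\e}}}{\mathbf{P}^{\infty}(A_{\e})}\,d\mathbf{P}^{\e} \ = \ -\log\mathbf{P}^{\infty}(A_{\e}).
\end{align*}
It therefore suffices to exhibit a quantitative positive lower bound on $\mathbf{P}^{\infty}(A_{\e})$ depending only on $\e$ and $\mathbf{h}(0,\cdot)$. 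Such a bound is automatically $N$-independent, because neither $\mathbf{b}$ nor $A_{\e}$ refers to $N$.

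For the small-ball lower bound on $\mathbf{P}^{\infty}(A_{\e})$ I would approximate $\mathbf{h}(0,\cdot)$ in sup-norm by a smooth function $\mathbf{h}_{\delta}$ lying in the Cameron-Martin space of $\mathbf{b}$, with $\mathbf{h}_{\delta}(0)=0$ and $\sup_{\x}|\mathbf{h}(0,\x)-\mathbf{h}_{\delta}(\x)|\leq\e/2$; this is possible by mollification, and the size of $\|\mathbf{h}_{\delta}'\|_{\mathscr{L}^{2}(\mathbb{T})}$ is controlled by the modulus of continuity of $\mathbf{h}(0,\cdot)$. A Cameron-Martin shift then bounds $\mathbf{P}^{\infty}(\sup_{\x}|\mathbf{b}(\x)-\mathbf{h}_{\delta}(\x)|\leq\e/2)$ from below by the product of an explicit exponential in $\|\mathbf{h}_{\delta}'\|_{\mathscr{L}^{2}(\mathbb{T})}$ and the centered small-ball probability $\mathbf{P}^{\infty}(\sup_{\x}|\mathbf{b}(\x)|\leq\e/4)$, which is strictly positive by the classical Brownian-bridge small-ball estimate. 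The inclusion $\{\sup_{\x}|\mathbf{b}-\mathbf{h}_{\delta}|\leq\e/2\}\subset A_{\e}$ then supplies the desired lower bound and defines $\kappa(\e,\mathbf{h})$ with the claimed dependence.

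The remaining two assertions follow from standard principles. For the pushforward assertion, the data-processing inequality for relative entropy gives $\mathsf{H}(\mathscr{T}_{\ast}\mathbf{P}^{\e}|\mathscr{T}_{\ast}\mathbf{P}^{\infty})\leq\mathsf{H}(\mathbf{P}^{\e}|\mathbf{P}^{\infty})\leq\kappa$ for any measurable $\mathscr{T}$, in particular for $\mathscr{T}=\grad$, $\mathscr{T}=\Pi_{N}$, and their compositions. For the path-space assertion, \eqref{eq:UVSBE1} restricted to the finite-dimensional range of $\Pi_{N}$ is a well-posed SDE generating a Markov semigroup driven by the common noise $\Pi_{N}\partial_{\x}\xi$; when two initial measures are evolved under this same Markov kernel, the Radon-Nikodym derivative between the resulting path-space laws depends only on the initial coordinate, so the path-space relative entropy coincides with the initial-data relative entropy and is again bounded by $\kappa$. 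The main technical obstacle is the small-ball estimate in the second paragraph; the other two steps are entirely off-the-shelf.
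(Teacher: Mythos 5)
Your proof is correct and follows essentially the same route as the paper's: reduce all three assertions to a lower bound on $\mathbf{P}^{\infty}(A_{\e})$, obtain that lower bound by approximating $\mathbf{h}(0,\cdot)$ by a Cameron--Martin element and combining a Girsanov/Cameron--Martin shift with the centered Brownian-bridge small-ball estimate, then dispatch the pushforward and path-space claims by the contraction (data-processing) inequality and the Markov structure of \eqref{eq:UVSBE1}, exactly as the paper does via Lemma \ref{lemma:re2}. The only cosmetic differences are that you use the exact identity $\mathsf{H}(\mathbf{P}^{\e}|\mathbf{P}^{\infty})=-\log\mathbf{P}^{\infty}(A_{\e})$ where the paper settles for the coarser bound $\sup_{0\leq\x\leq\mathbf{P}^{\infty}(\mathcal{E}(\e))^{-1}}|\x\log\x|$, and you phrase the shift step as a Cameron--Martin translation inequality where the paper phrases it as a Cauchy--Schwarz inequality against the Girsanov Radon--Nikodym derivative; both yield the same $\kappa(\e,\mathbf{h})$.
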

\begin{proof}
It suffices to prove the relative entropy bound for $\mathbf{P}^{\e}$ with respect to $\mathbf{P}^{\infty}$, as the other relative entropy estimates follow for free by Lemma \ref{lemma:re2}. To this end, observe the function $\x\log\x$ is continuous on $\R_{\geq0}$. Because relative entropy in Definition \ref{definition:re1} is controlled by the maximum of $\x\log\x$ for $\x$ equal to the Radon-Nikodym derivative of $\mathbf{P}^{\e}$ with respect to $\mathbf{P}^{\infty}$, it then suffices to estimate this Radon-Nikodym derivative uniformly in the randomness in the expectation defining relative entropy. Moreover, since $\mathbf{P}^{\e}$ is defined as conditioning $\mathbf{P}^{\infty}$ on an event that we denote by $\mathcal{E}(\e)$, it suffices to estimate $\mathcal{E}(\e)$-probability under $\mathbf{P}^{\infty}$ from below by a $\e,\mathbf{h}(0,\cdot)$-dependent constant. To make this precise, we note:
\begin{align}
\mathsf{H}(\mathbf{P}^{\e}|\mathbf{P}^{\infty}) \ \leq \ \|\mathbf{R}^{\e}\log\mathbf{R}^{\e}\|_{\omega;\infty} \ \leq \ \sup_{0\leq\x\leq\mathbf{P}^{\infty}(\mathcal{E}(\e))^{-1}} |\x\log\x| \ \lesssim_{\mathbf{P}^{\infty}(\mathcal{E}(\e))^{-1}} \ 1. \label{eq:KPZ121}
\end{align}
{ In \eqref{eq:KPZ121}, $\mathbf{R}^{\e}$ is the Radon-Nikodym derivative of $\mathbf{P}^{\e}$ with respect to $\mathbf{P}^{\infty}$ where $\|\|_{\omega;\infty}$ is the $\infty$-norm for functions on $\mathscr{D}(\mathbb{T})$. The last bound in \eqref{eq:KPZ121} follows from the fact that $\x\log\x$ restricted to $[0,C]$ is uniformly controlled by a $C$-dependent constant; to clarify, we note $\mathbf{R}^{\e}\leq\mathbf{P}^{\infty}(\mathcal{E}(\e))^{-1}$ by construction via conditioning in Definition \ref{definition:KPZ11}.}

In particular, to control the far LHS of \eqref{eq:KPZ121} in terms of $\e,\mathbf{h}(0,\cdot)$, it suffices to bound $\mathbf{P}^{\infty}(\mathcal{E}(\e))^{-1}$ from above in terms of $\e,\mathbf{h}(0,\cdot)$, and therefore $\mathbf{P}^{\infty}(\mathcal{E}(\e))$ from below in terms of $\e,\mathbf{h}(0,\cdot)$. To this end, take a deterministic smooth function $\psi_{\e}$ that is uniformly within $\e/2$ of $\mathbf{h}(0,\cdot)$; because $\mathbf{h}(0,\cdot)$ is uniformly continuous, we may take $\psi_{\e}$ whose derivatives are controlled by $\e,\mathbf{h}(0,\cdot)$-dependent constants. As $\mathcal{E}(\e)$ is the event on which a Brownian bridge $\mathbf{b}$ is uniformly within $\e$ of $\mathbf{h}(0,\cdot)$, we have containment $\mathcal{E}(\e)\supseteq\mathcal{E}'(\e)$ where $\mathcal{E}'(\e)$ is the event on which $\psi_{\e}$ and $\mathbf{b}$ are uniformly within $\e/2$ of each other, or equivalently $\mathbf{b}-\psi_{\e}$ is uniformly bounded by $\e/2$ in absolute value. As $\psi_{\e}$ is smooth with $\e,\mathbf{h}(0,\cdot)$-dependent derivatives, the Girsanov theorem tells us that the Radon-Nikodym derivative of $\mathbf{b}-\psi_{\e}$ with respect to Brownian bridge measure is $\mathscr{L}^{2}$ as a function of $\mathscr{C}(\mathbb{T})$; here, we think of both $\mathbf{b}-\psi_{\e}$ and $\mathbf{b}$ as SDEs parameterized by $\x\in\mathbb{T}$. Thus, if $\wt{\mathbf{R}}^{\e}$ denotes this last Radon-Nikodym derivative, the Cauchy-Schwarz inequality tells us the following probability estimate in which $\|\|_{\omega;2}$ denotes the $\mathscr{L}^{2}$-norm for functions on $\mathscr{D}(\mathbb{T})$ with respect to the law of $\mathbf{b}-\psi_{\e}$:
\begin{align}
{\mathbf{P}^{\infty}}\left({\sup}_{x\in\mathbb{T}}|\mathbf{b}(x)|\leq\e/2\right) \ \leq \ \|\wt{\mathbf{R}}^{\e}\|_{\omega;2}\left(\mathbf{P}^{\infty}\left(\mathcal{E}'(\e)\right)\right)^{1/2}. \label{eq:KPZ122}
\end{align}
The reflection principle for Brownian bridge implies the LHS of \eqref{eq:KPZ122} is controlled by an $\e$-dependent constant from below. An upper bound on $\|\wt{\mathbf{R}}^{\e}\|_{\omega;2}$ therefore bounds $\mathbf{P}^{\infty}(\mathcal{E}'(\e))$ from below by $\e,\mathbf{h}(0,\cdot)$-dependent constants, which completes the proof via \eqref{eq:KPZ121} and the sentence immediately after \eqref{eq:KPZ121}.
\end{proof}
\subsection{Convergence of $\mathbf{u}^{N,1,\e}$}
In this subsection, we prove that $\mathbf{u}^{N,1,\e}=\grad\mathbf{h}^{N,1,\e}$ converges to the solution of \eqref{eq:SBE} with the initial data claimed in Theorem \ref{theorem:KPZ1}. Roughly speaking, this is done in \cite{GP16} if the initial data of $\mathbf{h}^{N,1,\e}$ is a Brownian bridge {independent of $\xi$ and} free of any conditioning. This Brownian bridge initial data is ultimately used, however, to show that certain terms vanish and that certain limits exist in the large-$N$ limit. Therefore, the relative entropy estimate in Lemma \ref{lemma:KPZ12} allows us to inherit all these for $\mathbf{h}^{N,1,\e}$ and $\mathbf{u}^{N,1,\e}$ initial data in Definition \ref{definition:KPZ11}. We formally state this in the following.
\begin{lemma}\label{lemma:KPZ14}
Given any fixed positive $\e$, the process $\mathbf{u}^{N,1,\e}=\grad\mathbf{h}^{N,1,\e}$ converges {weakly in $\mathscr{C}(\R_{\geq0},\mathscr{D}(\mathbb{T}))$} to the {Cole-Hopf} solution of \emph{\eqref{eq:SBE}} with initial data given by the weak derivative limit $\lim_{N\to\infty}\grad\mathbf{h}^{N,1,\e}(0,\cdot)=\grad\mathbf{h}^{1,\e}(0,\cdot)$, which is the weak derivative of Brownian bridge {independent of $\xi$ and} conditioned to stay uniformly within $\e$ on $\mathbb{T}$ of $\mathbf{h}(0,\cdot)$ constructed in \emph{Definition \ref{definition:KPZ11}}.
\end{lemma}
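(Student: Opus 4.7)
The plan is to leverage the convergence result of \cite{GP16} for stationary Brownian bridge initial data and transfer it to the perturbed initial law $\mathbf{P}^{\e}$ via the relative entropy bound of Lemma \ref{lemma:KPZ12}. Concretely, \cite{GP16} shows that when \eqref{eq:UVSBE1} is initialized from $\grad\Pi_{N}\mathbf{b}$ (the gradient of Fourier-smoothed unconditioned Brownian bridge), the process $\grad\mathbf{h}^{N,1,\infty}$ converges in law to the stationary energy solution of \eqref{eq:SBE} with $\beta=2^{-1}\E^{\mathrm{G}}\mathbf{F}''$; by the equivalence \cite{GP,GJara} this is the Cole-Hopf solution. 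Definition \ref{definition:KPZ11} merely replaces $\mathbf{b}$ by $\mathbf{b}$ conditioned on an event, so the \textbf{same} SDE \eqref{eq:UVSBE1} is run from a different initial law, and the only thing that must be transported is the initial law of the driving data.

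The transfer rests on the path-space entropy bound in the last clause of Lemma \ref{lemma:KPZ12}. Denote by $\mathbf{Q}_{N}^{\e}$ and $\mathbf{Q}_{N}^{\infty}$ the path-space laws of $\mathbf{u}^{N,1,\e}$ and $\mathbf{u}^{N,1,\infty}$ respectively; then $\mathsf{H}(\mathbf{Q}_{N}^{\e}|\mathbf{Q}_{N}^{\infty})\leq\kappa$ uniformly in $N$. The classical entropy inequality (Lemma \ref{lemma:re2}) yields, for any path-space event $A_{N}$,
\begin{align}
\mathbf{Q}_{N}^{\e}(A_{N}) \ \leq \ \frac{\log 2 + \mathsf{H}(\mathbf{Q}_{N}^{\e}|\mathbf{Q}_{N}^{\infty})}{\log(1+\mathbf{Q}_{N}^{\infty}(A_{N})^{-1})}.
\end{align}
Consequently, every in-probability statement used in \cite{GP16} --- tightness of the laws in a Polish space of space-time distributions, vanishing of renormalized nonlinear terms on all scales between dyadic cutoffs, and validity of the quadratic martingale/energy-solution condition obtained via the Itô trick and Kipnis-Varadhan bound --- carries over from $\mathbf{Q}_{N}^{\infty}$ to $\mathbf{Q}_{N}^{\e}$ without modification.

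It remains to identify any subsequential weak limit of $\mathbf{Q}_{N}^{\e}$ as the Cole-Hopf solution of \eqref{eq:SBE} with the stated initial data. Tightness plus the transferred martingale conditions imply that any limit point is a controlled/energy solution of \eqref{eq:SBE} in the sense of \cite{GP}. The initial marginals converge, since by Definition \ref{definition:KPZ11} we have $\Pi_{N}\mathbf{h}^{1,\e}(0,\cdot)\to\mathbf{h}^{1,\e}(0,\cdot)$ uniformly on $\mathbb{T}$, hence $\grad\mathbf{h}^{N,1,\e}(0,\cdot)\to\grad\mathbf{h}^{1,\e}(0,\cdot)$ in $\mathscr{D}(\mathbb{T})$. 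Applying the uniqueness theorem for energy solutions in \cite{GP}, extended to non-stationary initial data with finite relative entropy to Brownian bridge as in \cite{GJS15,GP17}, pins down the limit as the unique Cole-Hopf solution started from $\grad\mathbf{h}^{1,\e}(0,\cdot)$.

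The main obstacle I expect is showing that the energy-solution theory of \cite{GP,GP16} can indeed be invoked for the \emph{non-stationary, non-Brownian} initial law $\grad\mathbf{h}^{1,\e}(0,\cdot)$: the proofs in \cite{GP16} use stationarity of Brownian bridge in a non-trivial way, through the Kipnis-Varadhan-type variance bounds that enter the Itô trick. The remedy is exactly the one outlined in \cite{GJS15,GP17}: these bounds survive under a uniform relative entropy perturbation, at the cost of an additive constant depending on $\kappa$ which is harmless in the $N\to\infty$ limit because $\kappa=\kappa(\e,\mathbf{h})$ is $N$-independent by Lemma \ref{lemma:KPZ12}. A secondary, minor, subtlety is verifying that the entropy inequality is available at the path-space level and not only at the level of fixed-time marginals; this is precisely what the last assertion of Lemma \ref{lemma:KPZ12} provides, via the data-processing/chain-rule properties of relative entropy recorded in Lemma \ref{lemma:re2}.
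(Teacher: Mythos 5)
Your proposal follows essentially the same route as the paper: show tightness via Mitoma's criterion and the path-space relative entropy bound of Lemma \ref{lemma:KPZ12}, transfer the \cite{GP16} identification of the quadratic/nonlinear term from unconditioned Brownian bridge data to the conditioned data using Lemma \ref{lemma:re2}, and invoke the uniqueness result of \cite{GP17} (the paper cites Theorem 2.8 there) for the non-stationary initial law. One small omission: the uniqueness step in \cite{GP17} requires an exponential moment estimate (their estimate (9)), and finite relative entropy alone is not quite enough to inherit it; the paper points out that the conditioning construction in Definition \ref{definition:KPZ11} actually gives an $\mathscr{L}^{\infty}$ bound on the Radon-Nikodym derivative (since $\mathbf{R}^{\e}\leq\mathbf{P}^{\infty}(\mathcal{E}(\e))^{-1}$), which is what makes that exponential moment estimate available. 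You should flag this stronger input explicitly, since "finite relative entropy" as stated would not suffice for that particular step.
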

\begin{proof}
{Note that if the process $\mathbf{u}^{N,1,\e}=\grad\mathbf{h}^{N,1,\e}$ converges weakly in $\mathscr{C}(\R_{\geq0},\mathscr{D}(\mathbb{T}))$, its initial data must be given by the limit $\lim_{N\to\infty}\grad\mathbf{h}^{N,1,\e}(0,\cdot)=\grad\mathbf{h}^{1,\e}(0,\cdot)$. We clarify this limit is taken with respect to weak-$\ast$ topology on $\mathscr{D}(\mathbb{T})$. It can be computed by first recalling $\lim_{N\to\infty}\mathbf{h}^{N,1,\e}(0,\cdot)=\lim_{N\to\infty}\grad\Pi_{N}\mathbf{h}^{1,\e}(0,\cdot)$. Because $\grad$ is continuous with respect to weak-$\ast$ topology on $\mathscr{D}(\mathbb{T})$, we know $\lim_{N\to\infty}\grad\Pi_{N}\mathbf{h}^{N,1,\e}(0,\cdot)=\grad\lim_{N\to\infty}\Pi_{N}\mathbf{h}^{1,\e}(0,\cdot)$. Because the law of $\mathbf{h}^{1,\e}$ is absolutely continuous with respect to Brownian bridge $\mathbf{b}$ on $\mathbb{T}$, and because $\Pi_{N}\mathbf{b}\to\mathbf{b}$ uniformly on $\mathbb{T}$ with probability 1, we deduce $\Pi_{N}\mathbf{h}^{1,\e}\to\mathbf{h}^{1,\e}$ uniformly on $\mathbb{T}$ with probability 1 as well. Since uniform topology is stronger than weak-$\ast$ topology, we deduce $\lim_{N\to\infty}\Pi_{N}\mathbf{h}^{1,\e}(0,\cdot)=\mathbf{h}^{1,\e}(0,\cdot)$. Ultimately, we obtain that the initial data of $\mathbf{u}^{\infty,1,\e}$, if it converges weakly in $\mathscr{C}(\R_{\geq0},\mathscr{D}(\mathbb{T}))$, is equal to (in law) $\mathbf{h}^{1,\e}$.}

It suffices to prove convergence of $\mathbf{u}^{N,1,\e}=\grad\mathbf{h}^{N,1,\e}$ to the Cole-Hopf solution of \eqref{eq:SBE}. For this, we establish notation, setting $\langle,\rangle_{\mathbb{T}}$ as the integral pairing for functions on $\mathbb{T}$ and $\langle,\rangle_{\t;\mathbb{T}}=\int_{0}^{\t}\langle,\rangle\d\s$. To prove convergence, we first will show tightness of $\mathbf{u}^{N,1,\e}$ and then identify limit points. {To this end, we have the proposed tightness for Brownian bridge initial data in \cite{GP16}. Lemma \ref{lemma:KPZ12} gives $\mathrm{O}(1)$-relative entropy for the path-space law of $\mathbf{u}^{N,1,\e}$ with respect to the path-space law with Brownian bridge initial data. The last paragraph Lemma \ref{lemma:re2} then lets us inherit the proposed tightness from that for Brownian bridge initial data. Ultimately, we deduce tightness of $\mathbf{u}^{N,1,\e}$ in $\mathscr{C}(\R_{\geq0},\mathscr{D}(\mathbb{T}))$.}

To identify limit points, the first step here is to use the It$\hat{\mathrm{o}}$ formula to obtain the following in which $\mathbf{B}^{N}(\t;\Phi)$ is a Brownian motion of quadratic variation $2\t\|\grad\Pi_{N}\Phi\|_{\x;2}^{2}$, where $\|\|_{\x;2}$ is the $\mathscr{L}^{2}(\mathbb{T})$-norm:
\begin{align}
\langle\mathbf{u}^{N,1,\e}(\t,\cdot),\Phi\rangle_{\mathbb{T}} \ = \ \langle\mathbf{u}^{N,1,\e}(0,\cdot),\Phi\rangle_{\mathbb{T}} + \langle\mathbf{u}^{N,1,\e},\Delta\Phi\rangle_{\t;\mathbb{T}} - \e_{N}^{-1}\langle\Pi_{N}\mathbf{F}(\e_{N}^{1/2}\mathbf{u}^{N,1,\e}), \grad\Phi\rangle_{\t;\mathbb{T}} + \mathbf{B}^{N}(\t;\Phi). \label{eq:KPZ141}
\end{align}
{Observe that each term in \eqref{eq:KPZ141} is a continuous functional of the $\mathscr{D}(\mathbb{T})$-valued process $\mathbf{u}^{N,1,\e}$. For example, the LHS of \eqref{eq:KPZ141} and the first three terms on the RHS are given by integrating space-time values of $\mathbf{u}^{N,1,\e}$ against deterministic functions. The last martingale term on the RHS, being determined by the remaining terms, is therefore a functional of $\mathbf{u}^{N,1,\e}$. Therefore, because $\mathbf{u}^{N,1,\e}$ converges along subsequences, any continuous functional of $\mathbf{u}^{N,1,\e}$ must also converge along subsequences. We choose the continuous functional given by the joint process consisting of the terms in \eqref{eq:KPZ141}.} We deduce by taking large-$N$ limits of \eqref{eq:KPZ141} the following identity:
\begin{align}
\langle\mathbf{u}^{\infty,1,\e}(\t,\cdot),\Phi\rangle_{\mathbb{T}} \ = \ \langle\mathbf{u}^{\infty,1,\e}(0,\cdot),\Phi\rangle_{\mathbb{T}} + \langle\mathbf{u}^{\infty,1,\e},\Delta\Phi\rangle_{\t;\mathbb{T}} - \lim_{N\to\infty}\e_{N}^{-1}\langle\Pi_{N}\mathbf{F}(\e_{N}^{1/2}\mathbf{u}^{N,1,\e}), \grad\Phi\rangle_{\t;\mathbb{T}} + \lim_{N\to\infty}\mathbf{B}^{N}(\t;\Phi). \nonumber
\end{align}
{Recall $\mathbf{B}^{N}(\t;\Phi)$ is a Brownian motion with linear-in-time quadratic variation $2\t\|\grad\Pi_{N}\Phi\|_{\x;2}^{2}$. Because $\Phi\in\mathscr{C}^{\infty}(\mathbb{T})$, we know $2\t\|\grad\Pi_{N}\Phi\|_{\x;2}^{2}\to2\t\|\grad\Phi\|_{\x;2}^{2}$ as $N\to\infty$. Therefore, $\mathbf{B}^{N}(\t;\Phi)$ must converge weakly (as a process) to a Brownian motion of quadratic variation $2\t\|\grad\Phi\|_{\x;2}^{2}$. (Technically, one needs uniform integrability of $|\mathbf{B}^{N}(\t;\Phi)|^{2}$ as $N\to\infty$ to guarantee $\mathbf{B}^{N}(\t;\Phi)$ converges to a martingale with the limit quadratic variation, but this follows from fourth-moment estimates readily available for Brownian motion.)} In \cite{GP16}, it is shown that for Brownian bridge initial data $\mathbf{h}^{N,1,\infty}$ {(that is independent of $\xi$)}, the remaining limit above converges to the process 
\begin{align}
\lim_{N\to\infty}\e_{N}^{-1}\langle\mathbf{F}(\e_{N}^{1/2}\mathbf{u}^{N,1,\infty}), \grad\Phi\rangle_{\t;\mathbb{T}} \ = \ {2^{-1}\E^{\mathrm{G}}\mathbf{F}''\cdot}\lim_{N\to\infty}\e_{N}^{-1}\langle(\mathbf{u}^{N,1,\infty})^{2},\grad\Phi\rangle_{\t;\mathbb{T}} \ \overset{\bullet}= \ \mathbf{A}(\t;\Phi). \label{eq:KPZ143}
\end{align}
{Again, since $\e_{N}^{-1}\langle\mathbf{F}(\e_{N}^{1/2}\mathbf{u}^{N,1,\infty}), \grad\Phi\rangle_{\t;\mathbb{T}}$ and $2^{-1}\E^{\mathrm{G}}\mathbf{F}''\e_{N}^{-1}\langle(\mathbf{u}^{N,1,\infty})^{2},\grad\Phi\rangle_{\t;\mathbb{T}}$ are continuous functionals of the $\mathscr{D}(\mathbb{T})$-valued process $\mathbf{u}^{N,1,\e}$, if they both converge (along subsequences) and their difference converges to zero in probability for Brownian bridge initial data $\mathbf{h}^{N,1,\infty}$, then the same must be true for $\mathbf{h}^{N,1,\e}$ initial data. This last claim follows from the path-space relative entropy estimate in Lemma \ref{lemma:KPZ12} and \eqref{eq:re21} (these show that asymptotically probability-zero events for Brownian bridge initial data $\mathbf{h}^{N,1,\infty}$ are also asymptotically probability-zero for $\mathbf{h}^{N,1,\e}$ initial data).} {Thus, we deduce, for any $\Phi\in\mathscr{C}^{\infty}(\mathbb{T})$, that}
\begin{align}
\langle\mathbf{u}^{\infty,1,\e}(\t,\cdot),\Phi\rangle_{\mathbb{T}} \ = \ \langle\mathbf{u}^{\infty,1,\e}(0,\cdot),\Phi\rangle_{\mathbb{T}} + \langle\mathbf{u}^{\infty,1,\e},\Delta\Phi\rangle_{\t;\mathbb{T}} - \mathbf{A}(\t;\Phi) + \mathbf{B}(\t;\Phi). \label{eq:KPZ144}
\end{align}
{By Theorem 2.8 in \cite{GP17}, in order to show that $\mathbf{u}^{\infty,1,\e}$ are Cole-Hopf solutions, it suffices to show that:
\begin{itemize}
\item The limit point $\mathbf{u}^{\infty,1,\e}$ of the $\mathscr{D}(\mathbb{T})$-valued processes $\mathbf{u}^{N,1,\e}$ satisfies \eqref{eq:energyestimatesolution}.
\item We have $\E|\mathrm{e}^{\langle\mathbf{u}^{\infty,1,\e}(\t,\cdot),\Theta_{\x}\rangle_{\mathbb{T}}|^{2}}<\infty$ for $\t\geq0$, where $\Theta_{\x}(\y)=\Theta(\x-\y)$ and $\Theta$ is defined by its Fourier transform $\int_{\mathbb{T}}\Theta(\y)\mathrm{e}^{-2\pi ik\y}\d\y = \mathbf{1}_{k\neq0}$. 
\end{itemize}
For these two points, note that Lemma \ref{lemma:KPZ12} actually shows the Radon-Nikodym derivative of the path-space law of $\mathbf{u}^{N,1,\e}$ with respect to the law of the solution $\mathbf{u}^{N,1,\infty}$ to \eqref{eq:UVSBE1} with stationary initial data is $\mathrm{O}_{\e}(1)$ in $\mathscr{L}^{\infty}$, not just in relative entropy. Thus, the same is true by lower semicontinuity of $\mathscr{L}^{\infty}$-norm for the Radon-Nikodym derivative of the path-space law of $\mathbf{u}^{\infty,1,\e}$ with respect to the path-space law of $\mathbf{u}^{\infty,1,\infty}$ with white noise initial data. In particular, we have the following two estimates, which reduce showing the previous two bullet points to showing the same but for $\mathbf{u}^{\infty,1,\e}$ replaced by $\mathbf{u}^{\infty,1,\infty}$:
\begin{align}
&\E|\int_{\s}^{\t}(\mathbf{u}^{\infty,1,\e}\ast\rho^{N})^{2}(-\grad\Phi)-(\mathbf{u}^{\infty,1,\e}\ast\rho^{M})^{2}(-\grad\Phi)\d r|^{2} \\
&\lesssim \ \E|\int_{\s}^{\t}(\mathbf{u}^{\infty,1,\infty}\ast\rho^{N})^{2}(-\grad\Phi)-(\mathbf{u}^{\infty,1,\infty}\ast\rho^{M})^{2}(-\grad\Phi)\d r|^{2} \label{eq:KPZ145a}\\ 
&\E|\mathrm{e}^{\langle\mathbf{u}^{\infty,1,\e}(\t,\cdot),\Theta_{\x}\rangle_{\mathbb{T}}|^{2}} \\
&\lesssim_{\e} \ \E|\mathrm{e}^{\langle\mathbf{u}^{\infty,1,\infty}(\t,\cdot),\Theta_{\x}\rangle_{\mathbb{T}}|^{2}}. \label{eq:KPZ145b}
\end{align}
The term \eqref{eq:KPZ145a} is bounded the RHS of \eqref{eq:energyestimatesolution}; this is shown in Lemma 1 of \cite{GJara}. The term \eqref{eq:KPZ145b} is finite by Theorem 2.13 in \cite{GP} (see also Proposition 2.1 in \cite{GP17}). This completes the proof.}
\end{proof}
\subsection{Estimate for $\mathbf{h}^{N,2,\e}$}
The result of this subsection is confirmation of the second bullet point in Theorem \ref{theorem:KPZ1} for our construction of $\mathbf{h}^{N,2,\e}$ made in Definition \ref{definition:KPZ11}. The proof follows by the parabolic maximum principle \cite{Evans}.
\begin{lemma}\label{lemma:KPZ15}
We first recall that $\mathbf{h}^{N,2,\e}=\mathbf{h}^{N}-\mathbf{h}^{N,1,\e}$ is defined to solve \emph{\eqref{eq:UVKPZ2}} with initial data $\mathbf{h}^{N}(0,\cdot)-\mathbf{h}^{N,1,\e}(0,\cdot)$. We have $|\mathbf{h}^{N,2,\e}(\t,\x)|\leq\e+\mathrm{o}_{N}$ for all non-negative $\t$ and $\x\in\mathbb{T}$, where $\mathrm{o}_{N}\to0$ uniformly in $\t,\x$ with probability 1.
\end{lemma}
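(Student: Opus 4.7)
The plan is to apply the parabolic maximum principle to \eqref{eq:UVKPZ2}, exploiting the structural fact that its nonlinearity vanishes whenever $\partial_{\x}\mathbf{h}^{N,2,\e}=0$; equivalently, any constant function is a stationary solution of \eqref{eq:UVKPZ2}. The desired bound will then follow once I establish that the initial data has $\mathscr{L}^{\infty}$-norm at most $\e+\mathrm{o}_{N}$.

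First I would estimate the initial data. Writing $v:=\mathbf{h}^{N,2,\e}$, by construction in \emph{Definition \ref{definition:KPZ11}} we have
\begin{align*}
v(0,\cdot) \ = \ \Pi_{N}\bigl(\mathbf{h}(0,\cdot)-\mathbf{h}^{1,\e}(0,\cdot)\bigr).
\end{align*}
The difference $\mathbf{h}(0,\cdot)-\mathbf{h}^{1,\e}(0,\cdot)$ has $\mathscr{L}^{\infty}$-norm at most $\e$ almost surely, and is almost surely Holder continuous (since the Brownian bridge $\mathbf{b}$ has Holder continuous sample paths and conditioning on a positive-probability event preserves this). Combining \emph{Assumption \ref{ass:intro3}} with the Dini-Lipschitz theorem (Fourier partial sums converge uniformly on Holder continuous functions), I deduce $\Pi_{N}(\mathbf{h}(0,\cdot)-\mathbf{h}^{1,\e}(0,\cdot))$ converges uniformly on $\mathbb{T}$ to $\mathbf{h}(0,\cdot)-\mathbf{h}^{1,\e}(0,\cdot)$ almost surely, so $\|v(0,\cdot)\|_{\infty}\leq\e+\mathrm{o}_{N}$ with probability one.

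Next I would recast \eqref{eq:UVKPZ2} in a form suited to the maximum principle. Since $\mathbf{h}^{N,1,\e}$ stays band-limited at Fourier level $N$ under the dynamics \eqref{eq:UVKPZ1} (its initial data and source terms all lie in $\on{span}\{\exp(2\pi ik\x):|k|\leq N\}$), the function $U(\t,\x):=\partial_{\x}\mathbf{h}^{N,1,\e}(\t,\x)$ is smooth in $\x$. Equation \eqref{eq:UVKPZ2} then reads
\begin{align*}
\partial_{\t}v \ = \ \Delta v+G(\t,\x,\partial_{\x}v),\quad G(\t,\x,p) \ := \ \e_{N}^{-1}\bigl[\mathbf{F}(\e_{N}^{1/2}(U(\t,\x)+p))-\mathbf{F}(\e_{N}^{1/2}U(\t,\x))\bigr],
\end{align*}
with $G(\t,\x,0)=0$. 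Setting $c:=\|v(0,\cdot)\|_{\infty}$ and applying the parabolic maximum principle \cite{Evans} to the perturbation $w:=v-c-\delta\t$ for small $\delta>0$, I rule out interior and terminal maxima via the standard contradiction: at such a point $\partial_{\t}v\geq\delta>0$ and $\Delta v\leq0$, whereas the PDE forces $\partial_{\t}v-\Delta v=G(\t,\x,0)=0$. Sending $\delta\to0$ yields $v\leq c$, and the symmetric argument for $-v$ gives $v\geq-c$; hence $|v(\t,\x)|\leq c\leq\e+\mathrm{o}_{N}$ for all $(\t,\x)\in\R_{\geq0}\times\mathbb{T}$.

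The main technical subtlety I anticipate is ensuring that $v$ is a classical solution so that the pointwise maximum-principle computation is valid, given that $\mathbf{F}$ is only locally Lipschitz. I would handle this via a standard bootstrap: local-in-time classical solvability of \eqref{eq:UVKPZ2} follows from a contraction-mapping argument using the smoothness of the band-limited initial datum and of the forcing $U$, combined with local Lipschitz continuity of $G$ in its last slot; the maximum principle applied on any interval of classical existence then produces a uniform $\mathscr{L}^{\infty}$ bound, which together with standard parabolic regularity precludes finite-time blow-up and extends the solution globally in $\t\geq0$ with the estimate $|v|\leq\e+\mathrm{o}_{N}$ propagating throughout.
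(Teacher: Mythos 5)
Your argument is essentially the paper's: bound the initial datum by $\e+\mathrm{o}_{N}$ (the paper uses a three-term triangle inequality through $\mathbf{h}(0,\cdot)$ and $\mathbf{h}^{1,\e}(0,\cdot)$, you invoke Dini-Lipschitz and Holder regularity of the bridge) and then propagate forward in time via the parabolic maximum principle (the paper factors the nonlinearity as a bounded coefficient times $\partial_{\x}\mathbf{h}^{N,2,\e}$, you exploit $G(\t,\x,0)=0$ with a $\delta\t$-perturbation — cosmetically different, same conclusion). Both versions pass lightly over global classical solvability of \eqref{eq:UVKPZ2} when $\mathbf{F}$ is only locally Lipschitz: your closing bootstrap would need an a priori bound on $\partial_{\x}\mathbf{h}^{N,2,\e}$, not merely the $\mathscr{L}^{\infty}$ bound the maximum principle provides, to rule out gradient blow-up, while the paper's proof sidesteps this by (incorrectly, relative to Assumption \ref{ass:intro2}) declaring $\mathbf{F}$ globally Lipschitz.
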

\begin{proof}
We first prove the inequality for $\t=0$. To this end, we have $\mathbf{h}^{N,2,\e}(0,\x)=\mathbf{h}^{N}(0,\x)-\mathbf{h}^{N,1,\e}(0,\x)$, so
\begin{align}
|\mathbf{h}^{N,2,\e}(0,\x)| \ \leq \ |\mathbf{h}^{N}(0,\x)-\mathbf{h}(0,\x)|+|\mathbf{h}(0,\x)-\mathbf{h}^{1,\e}(0,\x)| + |\mathbf{h}^{1,\e}(0,\x)-\mathbf{h}^{N,1,\e}(0,\x)|.
\end{align}
The first term on the far RHS is $\mathrm{o}_{N}$ by assumption. Moreover, by construction in Definition \ref{definition:KPZ11}, the second term on the far RHS is defined to be at most $\e$. The last term is $\mathrm{o}_{N}$, which vanishes uniformly in $\mathbb{T}$ with probability 1 by classical Brownian motion estimates. This gives the proposed estimate for the initial time. It now suffices to get the following for all $\t\geq0$ and $\x\in\mathbb{T}$ simultaneously with probability 1:
\begin{align}
|\mathbf{h}^{N,2,\e}(\t,\x)| \ \leq \ {\sup}_{\y\in\mathbb{T}}|\mathbf{h}^{N,2,\e}(0,\y)|. \label{eq:KPZ151}
\end{align}
Indeed, this bound would propagate the validity of the proposed inequality at $\t=0$ globally in time with probability 1. To prove this last bound, we first note {that $\mathbf{h}^{N,1,\e}$ is smooth with probability 1 by Lemma \ref{lemma:pde2}}. {We will only use this fact to guarantee that all derivatives of $\mathbf{h}^{N,1,\e}$ are defined pointwise with probability 1. Now consider the following.
\begin{itemize}
\item Fix $\chi\in\mathscr{C}^{\infty}(\R)$ such that its support is contained in $[-1,1]$, it is non-negative-valued, and $\int_{\R}\chi(\x)\d\x=1$. For any $\upsilon>0$, define $\chi_{\upsilon}(\x)=\upsilon^{-1}\chi(\x/\upsilon)$ for $\x\in\R$. 
\item Define $\mathbf{F}_{\upsilon}=\mathbf{F}\star\chi_{\upsilon}$ to be a smoothing of $\mathbf{F}$. Define $\mathbf{h}^{N,2,\e,\upsilon}$ to be the solution to the following PDE on $\R_{\geq0}\times\mathbb{T}$ with the same initial data $\mathbf{h}^{N,2,\e,\upsilon}(0,\x)=\mathbf{h}^{N,2,\e}(0,\x)$:
\begin{align}
\partial_{\t}\mathbf{h}^{N,2,\e,\upsilon} \ = \ \Delta\mathbf{h}^{N,2,\e,\upsilon} + \e_{N}^{-1}\mathbf{F}_{\upsilon}(\e_{N}^{1/2}\grad(\mathbf{h}^{N,1,\e}+\mathbf{h}^{N,2,\e,\upsilon})) - \e_{N}^{-1}\mathbf{F}_{\upsilon}(\e_{N}^{1/2}\grad\mathbf{h}^{N,1,\e}). \label{eq:KPZ152}
\end{align}
\end{itemize}
Let us comment briefly on the above construction; this will be key to our proof of Lemma \ref{lemma:KPZ15}.
\begin{itemize}
\item Note the support of $\chi_{\upsilon}$ is contained in $[-\upsilon,\upsilon]$.
\item Because $\mathbf{F}:\R\to\R$ is uniformly Lipschitz and $\chi_{\upsilon}$ is supported in $[-\upsilon,\upsilon]$, we know 
\begin{align}
|\mathbf{F}(\x)-\mathbf{F}_{\upsilon}(\x)|=|\int_{\R}\chi_{\upsilon}(\x-\y)(\mathbf{F}(\x)-\mathbf{F}(\y))\d\y| \ \lesssim_{\mathbf{F}} \ \upsilon;
\end{align}
the dependence on $\mathbf{F}$ is through its Lipschitz constant. The bound above is independent of $\x\in\R$.
\item The Lipschitz norm of $\grad^{k}\mathbf{F}_{\upsilon}$ is bounded by $\mathrm{O}_{k,\upsilon}(1)$ times that of $\mathbf{F}$ for all $\upsilon>0$. Indeed, for any $\x_{1},\x_{2}$, we have
\begin{align}
|\grad^{k}\mathbf{F}_{\upsilon}(\x_{1})-\grad^{k}\mathbf{F}_{\upsilon}(\x_{2})| \ &= \ |\int_{\R}\left(\grad^{k}\chi_{\upsilon}(\x_{1}-\y)-\grad^{k}\chi_{\upsilon}(\x_{2}-\y)\right)\mathbf{F}(\y)\d\y| \\
&= \ |\int_{\R}\grad^{k}\chi_{\upsilon}(\y)\left(\mathbf{F}(\x_{1}-\y)-\mathbf{F}(\x_{2}-\y)\right)\d\y| \\
&\lesssim_{k,\upsilon} \ \|\mathbf{F}\|_{\mathrm{Lip}}|\x_{1}-\x_{2}|,
\end{align}
where $\|\mathbf{F}\|_{\mathrm{Lip}}$ denotes the Lipschitz constant of $\mathbf{F}$.
\end{itemize}
To prove \eqref{eq:KPZ151}, it suffices to prove two ingredients. The first is that $\mathbf{h}^{N,2,\e,\upsilon}\to_{\upsilon\to0}\mathbf{h}^{N,2,\e}$ locally uniformly in time and uniformly in space (with probability 1). The second is that \eqref{eq:KPZ151} holds if we replace $\mathbf{h}^{N,2,\e}$ on both sides of the inequality by $\mathbf{h}^{N,2,\e,\upsilon}$. Note that the RHS does not change if we replace $\mathbf{h}^{N,2,\e}$ therein by $\mathbf{h}^{N,2,\e,\upsilon}$ as these two have the same initial data by construction. The second ingredient is almost immediate. Again, by Lemma \ref{lemma:pde3}, \eqref{eq:KPZ152} admits a classical solution. Now, let us rewrite \eqref{eq:KPZ152} as
\begin{align}
\partial_{\t}\mathbf{h}^{N,2,\e,\upsilon} \ = \ \Delta\mathbf{h}^{N,2,\e,\upsilon} + \e_{N}^{-1}\left(\frac{\mathbf{F}_{\upsilon}(\e_{N}^{1/2}\grad(\mathbf{h}^{N,1,\e}+\mathbf{h}^{N,2,\e,\upsilon})) - \e_{N}^{-1}\mathbf{F}_{\upsilon}(\e_{N}^{1/2}\grad\mathbf{h}^{N,1,\e})}{\grad\mathbf{h}^{N,2,\e,\upsilon}}\right)\grad\mathbf{h}^{N,2,\e,\upsilon}. \label{eq:KPZ153}
\end{align}
The coefficient for $\grad\mathbf{h}^{N,2,\e,\upsilon}$ on the RHS of \eqref{eq:KPZ153} is uniformly bounded in $\x$ (for each $N$):
\begin{align}
|\e_{N}^{-1}\frac{\mathbf{F}_{\upsilon}(\e_{N}^{1/2}\grad(\mathbf{h}^{N,1}+\mathbf{h}^{N,2,\e,\upsilon})) - \e_{N}^{-1}\mathbf{F}_{\upsilon}(\e_{N}^{1/2}\grad\mathbf{h}^{N,1})}{\grad\mathbf{h}^{N,2,\e,\upsilon}}| \ \lesssim_{N,\mathbf{F}} \ |\frac{\grad\mathbf{h}^{N,1,\e}+\grad\mathbf{h}^{N,2,\e,\upsilon}-\grad\mathbf{h}^{N,1,\e}}{\grad\mathbf{h}^{N,2,\e,\upsilon}}| \ = \ 1.
\end{align}
We note the dependence of $\mathbf{F}$ above is through its Lipschitz constant. Thus, to \eqref{eq:KPZ153}, we can apply the comparison principle to deduce \eqref{eq:KPZ151} holds for $\mathbf{h}^{N,2,\e,\upsilon}$ in place of $\mathbf{h}^{N,2,\e}$. (For the comparison principle, see Theorem 8 of Chapter 7.1.4 in \cite{Evans} for $n=1$ and $a^{11}=1$ and $b^{1}=\e_{N}^{-1}\frac{\mathbf{F}_{\upsilon}(\e_{N}^{1/2}\grad(\mathbf{h}^{N,1}+\mathbf{h}^{N,2,\e,\upsilon})) - \e_{N}^{-1}\mathbf{F}_{\upsilon}(\e_{N}^{1/2}\grad\mathbf{h}^{N,1})}{\grad\mathbf{h}^{N,2,\e,\upsilon}}$ and $c=0$. (We note that although Theorem 8 of Chapter 7.1.4 in \cite{Evans} assumes continuity of coefficients, the proof works for boundedness of the first-order coefficient.)

It now remains to prove the convergence $\mathbf{h}^{N,2,\e,\upsilon}\to_{\upsilon\to0}\mathbf{h}^{N,2,\e}$ locally uniformly in time and uniformly in space (with probability 1). To this end, we start with their mild representations. In particular, starting with the definition of $\mathbf{h}^{N,2,\e}$ and Lemma \ref{lemma:pde3}, we have the following calculation:
\begin{align}
&\mathbf{h}^{N,2,\e}(\t,\x)-\mathbf{h}^{N,2,\e,\upsilon}(\t,\x) \\
&= \ \int_{0}^{\t}\mathrm{e}^{(\t-\s)\Delta}\left(\e_{N}^{-1}\mathbf{F}(\e_{N}^{1/2}\grad(\mathbf{h}^{N,1,\e}(\s,\cdot)-\mathbf{h}^{N,2,\e}(\s,\cdot)))-\right)(\x)\d\s \\
&\quad-\int_{0}^{\t}\mathrm{e}^{(\t-\s)\Delta}\left(\e_{N}^{-1}\mathbf{F}_{\upsilon}(\e_{N}^{1/2}\grad(\mathbf{h}^{N,1,\e}(\s,\cdot)-\mathbf{h}^{N,2,\e,\upsilon}(\s,\cdot)))-\right)(\x)\d\s \\
&= \ \int_{0}^{\t}\mathrm{e}^{(\t-\s)\Delta}\left(\e_{N}^{-1}\mathbf{F}(\e_{N}^{1/2}\grad(\mathbf{h}^{N,1,\e}(\s,\cdot)-\mathbf{h}^{N,2,\e}(\s,\cdot)))-\right)(\x)\d\s \label{eq:KPZ154a} \\
&\quad-\int_{0}^{\t}\mathrm{e}^{(\t-\s)\Delta}\left(\e_{N}^{-1}\mathbf{F}(\e_{N}^{1/2}\grad(\mathbf{h}^{N,1,\e}(\s,\cdot)-\mathbf{h}^{N,2,\e,\upsilon}(\s,\cdot)))-\right)(\x)\d\s\label{eq:KPZ154b} \\
&\quad+\int_{0}^{\t}\mathrm{e}^{(\t-\s)\Delta}\left(\e_{N}^{-1}\mathbf{F}(\e_{N}^{1/2}\grad(\mathbf{h}^{N,1,\e}(\s,\cdot)-\mathbf{h}^{N,2,\e,\upsilon}(\s,\cdot)))-\right)(\x)\d\s\label{eq:KPZ154c} \\
&\quad-\int_{0}^{\t}\mathrm{e}^{(\t-\s)\Delta}\left(\e_{N}^{-1}\mathbf{F}_{\upsilon}(\e_{N}^{1/2}\grad(\mathbf{h}^{N,1,\e}(\s,\cdot)-\mathbf{h}^{N,2,\e,\upsilon}(\s,\cdot)))-\right)(\x)\d\s.\label{eq:KPZ154d}
\end{align}
Let us first estimate the contribution of \eqref{eq:KPZ154a}-\eqref{eq:KPZ154b}. Because $\mathbf{F}$ is uniformly Lipschitz, we have 
\begin{align}
&|\int_{0}^{\t}\mathrm{e}^{(\t-\s)\Delta}\left(\e_{N}^{-1}\mathbf{F}(\e_{N}^{1/2}\grad(\mathbf{h}^{N,1,\e}(\s,\cdot)-\mathbf{h}^{N,2,\e}(\s,\cdot)))-\right)(\x)\d\s \\
&-\int_{0}^{\t}\mathrm{e}^{(\t-\s)\Delta}\left(\e_{N}^{-1}\mathbf{F}(\e_{N}^{1/2}\grad(\mathbf{h}^{N,1,\e}(\s,\cdot)-\mathbf{h}^{N,2,\e,\upsilon}(\s,\cdot)))-\right)(\x)| \\
&\lesssim_{\mathbf{F},N} \ \int_{0}^{\t}\mathrm{e}^{(\t-\s)\Delta}\left(|\grad\mathbf{h}^{N,2,\e}(\s,\cdot)-\grad\mathbf{h}^{N,2,\e,\upsilon}(\s,\cdot)|\right)(\x)\d\s \\
&\leq \ \t\sup_{0\leq\s\leq\t}\sup_{\y\in\mathbb{T}}|\grad\mathbf{h}^{N,2,\e}(\s,\y)-\grad\mathbf{h}^{N,2,\e,\upsilon}(\s,\y)|.
\end{align}
The last estimate follows by contractivity of convolution with the heat kernel on $\mathbb{T}$ (with respect to the $\mathscr{L}^{\infty}$-norm on $\mathbb{T}$). As for \eqref{eq:KPZ154c}-\eqref{eq:KPZ154d}, because $|\mathbf{F}-\mathbf{F}_{\upsilon}|\lesssim\upsilon$ uniformly, we have 
\begin{align}
&|\int_{0}^{\t}\mathrm{e}^{(\t-\s)\Delta}\left(\e_{N}^{-1}\mathbf{F}(\e_{N}^{1/2}\grad(\mathbf{h}^{N,1,\e}(\s,\cdot)-\mathbf{h}^{N,2,\e,\upsilon}(\s,\cdot)))-\right)(\x)\d\s \\
&\quad-\int_{0}^{\t}\mathrm{e}^{(\t-\s)\Delta}\left(\e_{N}^{-1}\mathbf{F}_{\upsilon}(\e_{N}^{1/2}\grad(\mathbf{h}^{N,1,\e}(\s,\cdot)-\mathbf{h}^{N,2,\e,\upsilon}(\s,\cdot)))-\right)(\x)\d\s| \\
&\lesssim_{N} \ \sup_{\y\in\R}|\mathbf{F}(\y)-\mathbf{F}_{\upsilon}(\y)|\times\int_{0}^{\t}\mathrm{e}^{(\t-\s)\Delta}(1)\d\s \\
&\lesssim \ \t\upsilon.
\end{align}
Combining the previous three displays gives the following estimate for any $T\geq0$ fixed:
\begin{align}
\sup_{0\leq\t\leq T}\sup_{\x\in\mathbb{T}}|\mathbf{h}^{N,2,\e}(\t,\x)-\mathbf{h}^{N,2,\e,\upsilon}(\t,\x)| \ \lesssim_{N,\mathbf{F}} \ T\sup_{0\leq\s\leq\t}\sup_{\y\in\mathbb{T}}|\grad\mathbf{h}^{N,2,\e}(\s,\y)-\grad\mathbf{h}^{N,2,\e,\upsilon}(\s,\y)| + T\upsilon. \label{eq:KPZ155}
\end{align}
Thus, it remains to estimate the double supremum on the RHS of \eqref{eq:KPZ155}. To this end, we proceed in very similar fashion, namely estimating gradients of \eqref{eq:KPZ154a}-\eqref{eq:KPZ154d}. First, let us denote by $\mathbf{G}_{\t}(\x)$ the heat kernel for $\mathrm{e}^{\t\Delta}$ on the torus. Taking gradients of \eqref{eq:KPZ154a}-\eqref{eq:KPZ154b} (with respect to $\x$), we have
\begin{align}
&|\int_{0}^{\t}\grad\mathrm{e}^{(\t-\s)\Delta}\left(\e_{N}^{-1}\mathbf{F}(\e_{N}^{1/2}\grad(\mathbf{h}^{N,1,\e}(\s,\cdot)-\mathbf{h}^{N,2,\e}(\s,\cdot)))-\right)(\x)\d\s \\
&-\int_{0}^{\t}\grad\mathrm{e}^{(\t-\s)\Delta}\left(\e_{N}^{-1}\mathbf{F}(\e_{N}^{1/2}\grad(\mathbf{h}^{N,1,\e}(\s,\cdot)-\mathbf{h}^{N,2,\e,\upsilon}(\s,\cdot)))-\right)(\x)| \\
&\lesssim_{\mathbf{F},N} \ \int_{0}^{\t}\int_{\mathbb{T}}|\grad\mathbf{G}_{\t-\s}(\x-\y)|\cdot|\grad\mathbf{h}^{N,2,\e}(\s,\y)-\grad\mathbf{h}^{N,2,\e,\upsilon}(\s,\y)|\d\y\d\s \\
&\lesssim \ \int_{0}^{\t}(1+|\t-\s|^{-\frac12})\sup_{\y\in\mathbb{T}}|\grad\mathbf{h}^{N,2,\e}(\s,\y)-\grad\mathbf{h}^{N,2,\e,\upsilon}(\s,\y)|\d\s,
\end{align}
where the last estimate follows by $\int_{\mathbb{T}}|\grad\mathbf{G}_{\t-\s}(\x-\y)\d\y|\lesssim1+|\t-\s|^{-1/2}$; see Lemma \ref{lemma:pde3}. Taking gradients of \eqref{eq:KPZ154c}-\eqref{eq:KPZ154d}, we have 
\begin{align}
&|\int_{0}^{\t}\grad\mathrm{e}^{(\t-\s)\Delta}\left(\e_{N}^{-1}\mathbf{F}(\e_{N}^{1/2}\grad(\mathbf{h}^{N,1,\e}(\s,\cdot)-\mathbf{h}^{N,2,\e,\upsilon}(\s,\cdot)))-\right)(\x)\d\s \\
&\quad-\int_{0}^{\t}\grad\mathrm{e}^{(\t-\s)\Delta}\left(\e_{N}^{-1}\mathbf{F}_{\upsilon}(\e_{N}^{1/2}\grad(\mathbf{h}^{N,1,\e}(\s,\cdot)-\mathbf{h}^{N,2,\e,\upsilon}(\s,\cdot)))-\right)(\x)\d\s| \\
&\lesssim_{N} \ \sup_{\y\in\R}|\mathbf{F}(\y)-\mathbf{F}_{\upsilon}(\y)|\times\int_{0}^{\t}\int_{\mathbb{T}}|\grad\mathbf{G}_{\t-\s}(\x-\y)|\d\y\d\s \\
&\lesssim \ \sup_{\y\in\R}|\mathbf{F}(\y)-\mathbf{F}_{\upsilon}(\y)|\times\int_{0}^{\t}1+|\t-\s|^{-\frac12}\d\s \\
&\lesssim \ (\t+\t^{\frac12})\upsilon.
\end{align}
Combining the previous two displays, we deduce the following for any $\t\geq0$:
\begin{align}
&\sup_{\x\in\mathbb{T}}|\grad\mathbf{h}^{N,2,\e}(\t,\x)-\grad\mathbf{h}^{N,2,\e,\upsilon}(\t,\x)| \label{eq:KPZ156a}\\
&\lesssim_{N,\mathbf{F}} \ \int_{0}^{\t}(1+|\t-\s|^{-\frac12})\sup_{\y\in\mathbb{T}}|\grad\mathbf{h}^{N,2,\e}(\s,\y)-\grad\mathbf{h}^{N,2,\e,\upsilon}(\s,\y)|\d\s+ (\t+\t^{\frac12})\upsilon. \label{eq:KPZ156b}
\end{align}
We can now apply the Gronwall inequality to deduce
\begin{align}
\sup_{\x\in\mathbb{T}}|\grad\mathbf{h}^{N,2,\e}(\t,\x)-\grad\mathbf{h}^{N,2,\e,\upsilon}(\t,\x)| \ &\lesssim \ (\t+\t^{\frac12})\upsilon\times\exp\left(\int_{0}^{\t}1+|\t-\s|^{-\frac12}\d\s\right) \\
&\lesssim \ (\t+\t^{\frac12})\exp(\t+\t^{\frac12})\upsilon.
\end{align}
Combining this last display with \eqref{eq:KPZ155}, we obtain
\begin{align}
\sup_{0\leq\t\leq T}\sup_{\x\in\mathbb{T}}|\mathbf{h}^{N,2,\e}(\t,\x)-\mathbf{h}^{N,2,\e,\upsilon}(\t,\x)| \ \lesssim_{N,\mathbf{F}} \ T(T^{\frac12}+T)\mathrm{e}^{T^{1/2}+T}\upsilon + T\upsilon,
\end{align}
which shows, for each fixed $N>0$, the convergence $\mathbf{h}^{N,2,\e,\upsilon}\to_{\upsilon\to0}\mathbf{h}^{N,2,\e}$ locally uniformly in time and uniformly in space (with probability 1). This completes the proof.}
\end{proof}
\subsection{Proof of Theorem \ref{theorem:KPZ1}}
According to Lemma \ref{lemma:KPZ14} and Lemma \ref{lemma:KPZ15}, the two bullet points in Theorem \ref{theorem:KPZ1} are satisfied for our particular choice of $\mathbf{h}^{N,1,\e}$ and $\mathbf{h}^{N,2,\e}$ given by choosing their initial data via Definition \ref{definition:KPZ11} and requiring them to solve \eqref{eq:UVKPZ1} and \eqref{eq:UVKPZ2}, respectively. To compute the large-$N$ limit of $\mathbf{u}^{N}$, similar to the proof of Lemma \ref{lemma:KPZ14}, it suffices to show that the one-dimensional process $\mathbf{u}^{N}(\t,\Phi)\overset{\bullet}=\langle\mathbf{u}^{N}(\t,\cdot),\Phi\rangle_{\mathbb{T}}$ converges to the action of the solution to \eqref{eq:SBE} on $\Phi$ provided any generic smooth function $\Phi\in\mathscr{C}^{\infty}(\mathbb{T})$; this convergence must be in $\mathscr{C}(\R_{\geq0},\R)$ with the locally uniform topology, and it will be directly inherited from convergence of $\mathbf{u}^{N,1,\e}$ and estimates for $\mathbf{u}^{N,2,\e}$ from Lemma \ref{lemma:KPZ15}. Let us clarify that convergence in locally uniform topology on $\mathscr{C}(\R_{\geq0},\R)$ means convergence uniformly in $\mathscr{C}([0,\t_{+}],\R)$ for every $\t_{+}\geq0$. By integration-by-parts, we have
\begin{align}
\mathbf{u}^{N}(\t,\Phi) \ = \ \langle\mathbf{u}^{N,1,\e}(\t,\cdot),\Phi\rangle_{\mathbb{T}} - \langle\mathbf{h}^{N,2,\e}(\t,\cdot),\grad\Phi\rangle_{\mathbb{T}}. \label{eq:KPZ1I}
\end{align}
The second term on the RHS of \eqref{eq:KPZ1I} is bounded by order $\e$ times $\Phi$-data; this follows by the height function estimate in Lemma \ref{lemma:KPZ15}. Because this comparison holds uniformly on finite time-horizons, for any $\t_{+}\geq0$, uniformly in $\t\in[0,\t_{+}]$, we have the following in which $\mathbf{u}(\t,\Phi)$ denotes the action of the Cole-Hopf solution to \eqref{eq:SBE} on $\Phi$:
\begin{align}
|\mathbf{u}^{N}(\t,\Phi)-\mathbf{u}(\t,\Phi)| \ &\leq \ |\langle\mathbf{u}^{N,1,\e}(\t,\cdot),\Phi\rangle_{\mathbb{T}}-\mathbf{u}(\t,\Phi)| + \kappa(\Phi)\e + \kappa(\Phi)\mathrm{o}_{N}. \label{eq:KPZ1II}
\end{align}
Above, the constant $\kappa(\Phi)$ depends only on its argument $\Phi$. Observe the LHS of \eqref{eq:KPZ1II} does not depend on $\e$. Therefore, if we can show the first term on the RHS is bounded by $\kappa(\Phi)\e+ \kappa(\Phi)\mathrm{o}_{N}$, then because $\e$ on the RHS of \eqref{eq:KPZ1II} is arbitrary, we would be able to deduce $\mathbf{u}^{N}(\t,\Phi)\to\mathbf{u}(\t,\Phi)$ uniformly on $[0,\t_{+}]$ in probability provided any finite $\t_{+}\geq0$ and $\Phi$. As noted prior to \eqref{eq:KPZ1I}, this would complete the proof of convergence. We first note that by Lemma \ref{lemma:KPZ12}, it suffices to replace $\mathbf{u}^{N,1,\e}$ in \eqref{eq:KPZ1II} with the Cole-Hopf solution $\mathbf{u}^{1,\e}$ to \eqref{eq:SBE} with initial data the weak derivative of Brownian bridge {independent of $\xi$ and} conditioned to be within $\e$ of $\mathbf{h}(0,\cdot)$. It then suffices to employ the following estimate that compares solutions to \eqref{eq:SBE} with initial data whose antiderivatives are very close uniformly on $\mathbb{T}$; precisely, we choose $\alpha=\e$ for the result below, and we choose $\mathbf{h}^{1}(0,\cdot)=\mathbf{h}^{1,\e}(0,\cdot)$ from Definition \ref{definition:KPZ11} and $\mathbf{h}^{2}(0,\cdot)=\mathbf{h}(0,\cdot)$:
\begin{lemma}\label{lemma:KPZ16}
Suppose $\mathbf{u}^{1}$ and $\mathbf{u}^{2}$ are Cole-Hopf solutions to \emph{\eqref{eq:SBE}} with initial data $\mathbf{u}^{1}=\grad\mathbf{h}^{1}(0,\cdot)$ and $\mathbf{u}^{2}=\grad\mathbf{h}^{2}(0,\cdot)$, in which {$\mathbf{h}^{1}(0,\cdot)$ and $\mathbf{h}^{2}(0,\cdot)$} are continuous on $\mathbb{T}$ and satisfy the following uniformly in $\x\in\mathbb{T}$ with probability 1:
\begin{align}
|\mathbf{h}^{1}(0,\x)-\mathbf{h}^{2}(0,\x)| \ \leq \ \alpha.
\end{align}
Provided any $\Phi\in\mathscr{C}^{\infty}(\mathbb{T})$, we have the following uniformly in $\t\geq0$ with probability 1, where $\kappa(\Phi)$ depends only on $\Phi$:
\begin{align}
|\langle\mathbf{u}^{1}(\t,\cdot),\Phi\rangle_{\mathbb{T}}-\langle\mathbf{u}^{2}(\t,\cdot),\Phi\rangle_{\mathbb{T}}| \ \leq \ \kappa(\Phi)\alpha. \label{eq:KPZ16II}
\end{align}
\end{lemma}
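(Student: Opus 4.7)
The plan is to work through the Cole-Hopf representation, propagate the $\mathscr{L}^\infty$-closeness of heights through the stochastic heat equation using linearity plus the Mueller comparison principle, and then close the estimate via integration by parts on the torus. The main (minor) subtlety is to verify that the $\mathscr{L}^\infty$-closeness of initial heights persists uniformly in time with probability one; everything else is a routine unwinding of definitions.

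First, I would write $\mathbf{h}^{i}(\t,\cdot)=\beta^{-1}\log\mathbf{z}^{i}(\t,\cdot)$ for $i=1,2$, where $\mathbf{z}^{i}$ solves the stochastic heat equation $\partial_{\t}\mathbf{z}^{i}=\Delta\mathbf{z}^{i}+\beta\mathbf{z}^{i}\xi$ with strictly positive continuous initial data $\mathbf{z}^{i}(0,\cdot)=\exp(\beta\mathbf{h}^{i}(0,\cdot))$; we may assume $\beta\neq 0$, since the $\beta=0$ case of \eqref{eq:SBE} is linear and the statement is then immediate by Duhamel. The hypothesis $|\mathbf{h}^{1}(0,\x)-\mathbf{h}^{2}(0,\x)|\leq\alpha$ uniformly in $\x$ rewrites, after exponentiating, as the pointwise inequalities
\begin{align}
e^{-\beta\alpha}\mathbf{z}^{2}(0,\x) \ \leq \ \mathbf{z}^{1}(0,\x) \ \leq \ e^{\beta\alpha}\mathbf{z}^{2}(0,\x), \qquad \x\in\mathbb{T}.
\end{align}

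Second, I would promote these inequalities to all $\t\geq 0$. By linearity of the stochastic heat equation driven by the common noise $\xi$, the process $e^{\beta\alpha}\mathbf{z}^{2}-\mathbf{z}^{1}$ is itself a solution of the SHE with non-negative continuous initial data, and likewise for $\mathbf{z}^{1}-e^{-\beta\alpha}\mathbf{z}^{2}$. The Mueller comparison principle \cite{Mu} then yields
\begin{align}
e^{-\beta\alpha}\mathbf{z}^{2}(\t,\x) \ \leq \ \mathbf{z}^{1}(\t,\x) \ \leq \ e^{\beta\alpha}\mathbf{z}^{2}(\t,\x), \qquad \t\geq 0,\ \x\in\mathbb{T},
\end{align}
with probability $1$. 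Taking logarithms and dividing by $\beta$ yields $|\mathbf{h}^{1}(\t,\x)-\mathbf{h}^{2}(\t,\x)|\leq\alpha$ uniformly in $(\t,\x)\in\R_{\geq0}\times\mathbb{T}$ almost surely; this is the key propagation step.

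Third, I would use integration by parts on the torus, where there are no boundary contributions, to write
\begin{align}
\langle\mathbf{u}^{1}(\t,\cdot)-\mathbf{u}^{2}(\t,\cdot),\Phi\rangle_{\mathbb{T}} \ = \ -\langle\mathbf{h}^{1}(\t,\cdot)-\mathbf{h}^{2}(\t,\cdot),\grad\Phi\rangle_{\mathbb{T}},
\end{align}
which is legitimate since $\mathbf{u}^{i}=\grad\mathbf{h}^{i}$ as distributions and $\Phi\in\mathscr{C}^{\infty}(\mathbb{T})$. Applying the uniform bound from the previous step and Holder's inequality gives
\begin{align}
|\langle\mathbf{u}^{1}(\t,\cdot)-\mathbf{u}^{2}(\t,\cdot),\Phi\rangle_{\mathbb{T}}| \ \leq \ \alpha\,\|\grad\Phi\|_{\mathscr{L}^{1}(\mathbb{T})} \ \leq \ \kappa(\Phi)\,\alpha,
\end{align}
with $\kappa(\Phi)=\|\grad\Phi\|_{\mathscr{L}^{1}(\mathbb{T})}$ depending only on $\Phi$, which is exactly \eqref{eq:KPZ16II}. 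The only conceivable obstacle is verifying the comparison principle applies to continuous (not necessarily Holder) initial data, but the SHE is well-posed for any strictly positive continuous initial condition by superposition against Dirac initial data, as recalled in the discussion preceding Theorem \ref{theorem:KPZ1}, so no new input is required.
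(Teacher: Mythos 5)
Your proof is correct and follows essentially the same route as the paper: both unwind the Cole--Hopf map to the SHE, exponentiate the uniform height bound, propagate it to all $\t\geq0$ via linearity of the SHE together with the Mueller comparison principle, take logarithms, and conclude by integration by parts on $\mathbb{T}$. The only cosmetic difference is your choice $\kappa(\Phi)=\|\grad\Phi\|_{\mathscr{L}^{1}(\mathbb{T})}$ versus the paper's $\sup_{\mathbb{T}}|\grad\Phi|$, which coincide up to a constant on the unit torus.
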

\begin{proof}
We first rewrite the LHS of \eqref{eq:KPZ16II} by realizing $\mathbf{u}^{i}=\grad\mathbf{h}^{i}$, where $\mathbf{h}^{i}$ is the Cole-Hopf solution to \eqref{eq:KPZ}. We then move the gradient onto $\Phi$ and write $\mathbf{h}^{i}=\beta^{-1}\log\mathbf{Z}^{i}$, where $\mathbf{Z}^{i}$ solves the \emph{SHE} $\partial_{\t}\mathbf{Z}^{i}=\Delta\mathbf{Z}^{i}+\beta\mathbf{Z}^{i}\xi$. This gives
\begin{align}
\langle\mathbf{u}^{1}(\t,\cdot),\Phi\rangle_{\mathbb{T}}-\langle\mathbf{u}^{2}(\t,\cdot),\Phi\rangle_{\mathbb{T}} \ = \ -\beta^{-1}\left(\langle\log\mathbf{Z}^{1}(\t,\cdot),\grad\Phi\rangle_{\mathbb{T}}-\langle\log\mathbf{Z}^{2}(\t,\cdot),\grad\Phi\rangle_{\mathbb{T}}\right). \label{eq:KPZ16II1}
\end{align}
Because $\mathbf{h}^{i}$ are continuous and satisfy an a priori global estimate for the initial data, it suffices to compare the solutions SHE with globally close initial data. This follows by the comparison principle for the SHE equations with initial data $\exp(\beta(\mathbf{h}^{2}(0,\x)\pm\alpha))$ and $\exp(\beta\mathbf{h}^{1}(0,\x))$. In particular, we know $\exp(\beta\mathbf{h}^{2}(0,\x)-\beta\alpha)\leq\exp(\beta\mathbf{h}^{1}(0,\x))\leq\exp(\beta\mathbf{h}^{2}(0,\x)+\beta\alpha)$ given our initial data assumption since $\beta$ is positive. If we run the SHE with these three initial data, the comparison principle \cite{Mu} for the SHE implies this two-sided bound remains true not just for time 0 but for any positive time $\t\geq0$ because $\exp(\pm\beta\alpha)$ are constants and SHE is linear. Because exponential and logarithm functions preserve order, we deduce $\mathbf{h}^{2}(\t,\x)-\alpha\leq\mathbf{h}^{1}(\t,\x)\leq\mathbf{h}^{2}(\t,\x)+\alpha$ for all $\t\geq0$ and $\x\in\mathbb{T}$. Therefore, the RHS of \eqref{eq:KPZ16II1} is controlled by the supremum of $|\grad\Phi|$ times $\alpha|\mathbb{T}|=\alpha$ as $\mathbb{T}$ is unit-length, so we are done.
\end{proof}
%
%
%
\section{Proof of Theorem \ref{theorem:KPZ2}}
For the sake of clarity, we will establish all the notation to be used in this section in the following list. In particular, we will not inherit any notation used in the previous sections unless explicitly mentioned in order to reset notation and avoid any possible confusion in presentation.
\begin{itemize}
\item Consider a deterministic continuous function $\mathbf{h}(0,\cdot)$ thought of as initial data in Theorem \ref{theorem:KPZ2}. We define $\mathbf{h}(\t,\x)$ as the Cole-Hopf solution to \eqref{eq:KPZ} with initial data $\mathbf{h}(0,\cdot)$ and $\mathbf{u}(\t,\cdot)=\grad\mathbf{h}(\t,\cdot)$ the Cole-Hopf solution to \eqref{eq:SBE}. 
\item Provided any positive $\e$, we define $\mathbf{h}^{\e}(0,\cdot)$ to be Brownian bridge {independent of $\xi$ and} conditioned to be within $\e$ of $\mathbf{h}(0,\cdot)$ uniformly on $\mathbb{T}$. We define $\mathbf{h}^{\e}(\t,\x)$ as the Cole-Hopf solution to \eqref{eq:KPZ} with initial data $\mathbf{h}^{\e}(0,\cdot)$ and $\mathbf{u}^{\e}(\t,\cdot)=\grad\mathbf{h}^{\e}(\t,\cdot)$ the Cole-Hopf solution to \eqref{eq:SBE}. The noise in this and the previous bullet points are the same noise in a pathwise sense.
\item Lastly, we define $\wt{\mathbf{h}}^{\e}$ to be difference $\mathbf{h}-\mathbf{h}^{\e}$. This is a non-regularized analog of $\mathbf{h}^{N,2,\e}$ from Theorem \ref{theorem:KPZ1}.
\end{itemize}
Theorem \ref{theorem:KPZ2} will follow from the list of estimates provided below, as we illustrate after we present these estimates. Let us first note that we will use some notation not-yet-established, so as to match it soon with notation in Theorem \ref{theorem:KPZ2}.
\begin{lemma}\label{lemma:KPZ21}
We have $|\wt{\mathbf{h}}^{\e}(\t,\x)|\leq\e$ for all $\t\geq0$ and $\x\in\mathbb{T}$ with probability 1.
\end{lemma}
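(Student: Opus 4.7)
The plan is to run essentially the same Cole-Hopf comparison argument that was used in Lemma \ref{lemma:KPZ16}, now applied pointwise to the height functions $\mathbf{h}$ and $\mathbf{h}^{\e}$ themselves rather than to their gradients tested against a smooth function. By construction in the second bullet point before the statement, the initial condition $\mathbf{h}^{\e}(0,\cdot)$ is Brownian bridge conditioned to satisfy $\sup_{\x \in \mathbb{T}}|\mathbf{h}(0,\x)-\mathbf{h}^{\e}(0,\x)|\leq\e$ with probability 1, so the inequality of the lemma is available at time $\t=0$. It therefore suffices to propagate this two-sided pointwise bound to all $\t\geq 0$.

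To do this, I would write $\mathbf{h}(\t,\x)=\beta^{-1}\log\mathbf{Z}(\t,\x)$ and $\mathbf{h}^{\e}(\t,\x)=\beta^{-1}\log\mathbf{Z}^{\e}(\t,\x)$ where $\mathbf{Z}$ and $\mathbf{Z}^{\e}$ solve the SHE $\partial_{\t}\mathbf{Z}=\Delta\mathbf{Z}+\beta\mathbf{Z}\xi$ with the \emph{same} driving noise $\xi$ and with initial data $\mathbf{Z}(0,\x)=\exp(\beta\mathbf{h}(0,\x))$ and $\mathbf{Z}^{\e}(0,\x)=\exp(\beta\mathbf{h}^{\e}(0,\x))$. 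The initial data bound $|\mathbf{h}(0,\x)-\mathbf{h}^{\e}(0,\x)|\leq\e$ translates into the pointwise two-sided inequality
\begin{align*}
e^{-\beta\e}\,\mathbf{Z}^{\e}(0,\x) \ \leq \ \mathbf{Z}(0,\x) \ \leq \ e^{\beta\e}\,\mathbf{Z}^{\e}(0,\x)
\end{align*}
uniformly in $\x\in\mathbb{T}$, since $\beta\geq 0$. Now the SHE is linear in its initial data and the multiplicative constants $e^{\pm\beta\e}$ are deterministic, so the three processes $e^{-\beta\e}\mathbf{Z}^{\e}$, $\mathbf{Z}$, and $e^{\beta\e}\mathbf{Z}^{\e}$ are all solutions of the SHE driven by the same noise $\xi$, with initial data ordered as above.

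Invoking the comparison principle for the SHE from \cite{Mu}, which is exactly the tool used in the proof of Lemma \ref{lemma:KPZ16}, the ordering is preserved almost surely for all $\t\geq 0$ and $\x\in\mathbb{T}$, giving
\begin{align*}
e^{-\beta\e}\mathbf{Z}^{\e}(\t,\x) \ \leq \ \mathbf{Z}(\t,\x) \ \leq \ e^{\beta\e}\mathbf{Z}^{\e}(\t,\x).
\end{align*}
Applying $\beta^{-1}\log$, which preserves order and converts the multiplicative constants back into additive $\pm\e$, yields $|\mathbf{h}(\t,\x)-\mathbf{h}^{\e}(\t,\x)|=|\widetilde{\mathbf{h}}^{\e}(\t,\x)|\leq\e$ uniformly in $(\t,\x)\in\R_{\geq 0}\times\mathbb{T}$ with probability 1, which is what is claimed. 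I do not anticipate a real obstacle: the only subtlety is to ensure that both Cole-Hopf solutions are well-posed for merely continuous initial data (so that $\mathbf{Z}(0,\cdot)$ and $\mathbf{Z}^{\e}(0,\cdot)$ are bounded and continuous), and this is precisely the content of the paragraph preceding Theorem \ref{theorem:KPZ1}, which invokes \cite{ACQ} and the linearity of the SHE to handle continuous initial data by superposition of the Dirac case. Everything else is a direct application of the same comparison-principle/Cole-Hopf machinery as in Lemma \ref{lemma:KPZ16}, simply without integrating against a test function.
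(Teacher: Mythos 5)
Your proposal is correct and is essentially identical to the paper's argument: the paper's proof of this lemma is a one-line pointer to the proof of Lemma \ref{lemma:KPZ16} (with $\mathbf{h}^{1}=\mathbf{h}(0,\cdot)$, $\mathbf{h}^{2}=\mathbf{h}^{\e}(0,\cdot)$, $\alpha=\e$), and the core of that proof is precisely the Cole--Hopf transform, multiplicative sandwiching of SHE initial data by $e^{\pm\beta\e}$, linearity of SHE plus the comparison principle of \cite{Mu}, and then $\beta^{-1}\log$. You have simply unfolded the citation into the explicit argument.
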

\begin{lemma}\label{lemma:KPZ22}
Let $\mathbf{Q}^{\e}$ denote the law of $\mathbf{h}^{\e}$ as a probability measure on $\mathscr{C}(\mathbb{T})$. Recalling $\mathbf{P}^{\infty}$ as the law of Brownian bridge on $\mathscr{C}(\mathbb{T})$, we have the relative entropy estimate $\mathsf{H}(\mathbf{Q}^{\e}|\mathbf{P}^{\infty})\leq\kappa(\e,\mathbf{h})$ for all $\e$, where $\kappa(\e,\mathbf{h})$ depends only on $\e$ and the modulus of continuity of the initial data $\mathbf{h}(0,\cdot)$. Therefore, by \emph{Lemma \ref{lemma:re2}}, the same relative entropy estimate holds if we replace $(\mathbf{Q}^{\e},\mathbf{P}^{\infty})$ by their respective pushforward under any common map.
\end{lemma}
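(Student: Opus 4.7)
The plan is to follow the argument of Lemma \ref{lemma:KPZ12} essentially verbatim, since $\mathbf{Q}^{\e}$ is precisely the law of the initial data $\mathbf{h}^{\e}(0,\cdot)$, which by construction is Brownian bridge conditioned to lie uniformly within $\e$ of $\mathbf{h}(0,\cdot)$; this is the same object as $\mathbf{P}^{\e}$ in Definition \ref{definition:KPZ11}, and $\mathbf{P}^{\infty}$ is in both places unconditional Brownian bridge measure. The only genuinely new content is to refine the bookkeeping so that the resulting constant depends on $\mathbf{h}(0,\cdot)$ only through its modulus of continuity.

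As in Lemma \ref{lemma:KPZ12}, I first reduce to a lower bound on $\mathbf{P}^{\infty}(\mathcal{E}(\e))$, where $\mathcal{E}(\e)$ is the event that a Brownian bridge lies uniformly within $\e$ of $\mathbf{h}(0,\cdot)$. This uses only that $\d\mathbf{Q}^{\e}/\d\mathbf{P}^{\infty}\leq\mathbf{P}^{\infty}(\mathcal{E}(\e))^{-1}$ by construction of the conditioning, and that $\x\log\x$ is bounded on compact intervals $[0,C]$ by a function of $C$ alone, exactly as in \eqref{eq:KPZ121}. Thus everything reduces to a lower bound on $\mathbf{P}^{\infty}(\mathcal{E}(\e))$ depending only on $\e$ and the modulus of continuity of $\mathbf{h}(0,\cdot)$.

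Next I would choose a smooth approximant $\psi_{\e}$ uniformly within $\e/2$ of $\mathbf{h}(0,\cdot)$ on $\mathbb{T}$, obtained by mollifying $\mathbf{h}(0,\cdot)$ at a scale $\delta=\delta(\e,\mathbf{h})$ taken small enough that the modulus of continuity of $\mathbf{h}(0,\cdot)$ at scale $\delta$ is at most $\e/2$. Standard convolution estimates then give the derivative bound $\|\grad\psi_{\e}\|_{\infty}\lesssim\e\delta^{-1}$, which depends only on $\e$ and the modulus of continuity, not on $\mathbf{h}(0,\cdot)$ itself. The containment $\mathcal{E}(\e)\supseteq\{\|\mathbf{b}-\psi_{\e}\|_{\infty}\leq\e/2\}$, together with Girsanov's theorem applied to Brownian bridge viewed as an SDE in the space variable (bounding the $\mathscr{L}^{2}$-norm of the Radon-Nikodym density of the law of $\mathbf{b}-\psi_{\e}$ with respect to $\mathbf{P}^{\infty}$ by a function of $\|\grad\psi_{\e}\|_{\infty}$) and Cauchy-Schwarz exactly as in \eqref{eq:KPZ122}, then reduces everything to the reflection-principle lower bound on $\mathbf{P}^{\infty}(\|\mathbf{b}\|_{\infty}\leq\e/2)$, which depends only on $\e$. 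The pushforward version of the estimate is then immediate from the data-processing inequality in Lemma \ref{lemma:re2}.

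The only substantively new ingredient beyond the argument of Lemma \ref{lemma:KPZ12} is the construction of $\psi_{\e}$ whose $C^{1}$-norm is controlled solely in terms of $\e$ and the modulus of continuity of $\mathbf{h}(0,\cdot)$; I expect this mild refinement — essentially a quantitative Stone-Weierstrass step via mollification — to be the main, though still routine, obstacle.
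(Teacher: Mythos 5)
Your proposal matches the paper's proof, which simply defers to the argument of Lemma \ref{lemma:KPZ12} and notes that the lower bound on $\mathbf{P}^{\infty}(\mathcal{E}(\e))$ obtained there depends only on $\e$ and properties of the continuous function $\mathbf{h}(0,\cdot)$. The only thing you add is the explicit mollification construction of $\psi_{\e}$ to verify the constant depends solely on the modulus of continuity — a detail the paper leaves implicit in the phrase ``because $\mathbf{h}(0,\cdot)$ is uniformly continuous, we may take $\psi_{\e}$ whose derivatives are controlled by $\e,\mathbf{h}(0,\cdot)$-dependent constants'' — so this is the same approach, just spelled out more fully.
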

\begin{lemma}\label{lemma:KPZ23}
We first let $\grad\mathbf{Q}^{\e}(\t)$ denote the law of $\mathbf{u}^{\e}(\t,\cdot)=\grad\mathbf{h}^{\e}(\t,\cdot)$ as a probability measure on $\mathscr{D}(\mathbb{T})$, assuming the initial measure is given by $\grad\mathbf{Q}^{\e}$, the pushforward of $\mathbf{Q}^{\e}$ from \emph{Lemma \ref{lemma:KPZ22}} under the gradient map $\mathscr{C}(\mathbb{T})\to\mathscr{D}(\mathbb{T})$. Recalling from \emph{Theorem \ref{theorem:KPZ2}} that $\eta$ denotes the Gaussian white noise measure on $\mathscr{D}(\mathbb{T})$, then for any $\t\geq0$, we have $\mathsf{H}(\grad\mathbf{Q}^{\e}(\t)|\eta)\leq\exp(-C\t)\mathsf{H}(\grad\mathbf{Q}^{\e}|\eta)$, in which $C$ is a positive constant that depends on nothing.
\end{lemma}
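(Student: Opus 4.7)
The plan is to deduce the exponential decay of relative entropy from a log-Sobolev inequality (LSI) for the UV-regularized stochastic Burgers equation with constant independent of the cutoff $N$, combined with the standard entropy-dissipation identity along the Markov semigroup, and then pass to the limit $N\to\infty$. At the regularized level, let $\mathbf{u}^{N,\e}$ solve the Galerkin SBE \eqref{eq:UVSBE1} with initial data $\Pi_{N}\grad\mathbf{h}^{\e}(0,\cdot)$. This is a finite-dimensional SDE on the image of $\Pi_{N}$ whose invariant measure is $\eta_{N}=(\Pi_{N})_{\ast}\eta$, since the Burgers drift is a conservative (antisymmetric) perturbation of the Ornstein-Uhlenbeck generator as in \cite{GP20}.

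The key analytic input is a uniform-in-$N$ LSI $\mathsf{H}(\mu|\eta_{N})\leq(2C)^{-1}\mathsf{I}_{N}(\mu|\eta_{N})$, where $\mathsf{I}_{N}$ is the Fisher information associated with the symmetric part of the generator $\mathscr{L}_{N}$ of $\mathbf{u}^{N,\e}$. Writing $\mathscr{L}_{N}=\mathscr{S}_{N}+\mathscr{A}_{N}$, where $\mathscr{S}_{N}$ is a self-adjoint OU generator and $\mathscr{A}_{N}$ is the anti-self-adjoint Burgers drift in $\mathscr{L}^{2}(\eta_{N})$, the antisymmetric part does not contribute to the carr\'e du champ, so LSI for $\mathscr{L}_{N}$ reduces to LSI for $\mathscr{S}_{N}$. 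Since $\mathscr{S}_{N}$ is a direct sum of OU processes on the kept Fourier modes, each with a standard Gaussian invariant law, the Gaussian LSI (Bakry-\'Emery criterion) supplies the $N$-independent constant.

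Combining LSI with the dissipation identity $\frac{\d}{\d\t}\mathsf{H}(\mu_{\t}^{N,\e}|\eta_{N})=-\mathsf{I}_{N}(\mu_{\t}^{N,\e}|\eta_{N})$ and applying Gr\"onwall yields $\mathsf{H}(\mu_{\t}^{N,\e}|\eta_{N})\leq\exp(-2C\t)\mathsf{H}(\mu_{0}^{N,\e}|\eta_{N})$ for every $N$. The initial entropy is bounded uniformly in $N$ via Lemma \ref{lemma:KPZ22} pushed forward under $\grad\Pi_{N}$, using that $\grad\mathbf{P}^{\infty}$ agrees with $\eta$ modulo the zero Fourier mode. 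To conclude, I take $N\to\infty$: by Lemma \ref{lemma:KPZ14} and the convergence of $\eta_{N}$ to $\eta$, lower semicontinuity of relative entropy under weak-$\ast$ limits on $\mathscr{D}(\mathbb{T})$, equivalently monotonicity under refinement of the $\sigma$-algebras generated by finitely many Fourier modes, promotes the finite-$N$ estimate to the claimed bound $\mathsf{H}(\grad\mathbf{Q}^{\e}(\t)|\eta)\leq\exp(-C\t)\mathsf{H}(\grad\mathbf{Q}^{\e}|\eta)$, after relabeling $2C$ as $C$.

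The main obstacle is the uniform-in-$N$ character of the LSI constant and the compatibility of the limit procedures. In particular, one must verify the antisymmetry of the Burgers drift in $\mathscr{L}^{2}(\eta_{N})$ rigorously at the finite-$N$ level, where the SDE is genuinely finite-dimensional; confirm that the Dirichlet form of $\mathscr{S}_{N}$ scales in Fourier modes so that the Bakry-\'Emery constant does not degenerate as $N$ grows; and ensure that lower semicontinuity applies in a topology in which $\grad\mathbf{Q}^{\e,N}(\t)$ converges to $\grad\mathbf{Q}^{\e}(\t)$ and $\eta_{N}$ converges to $\eta$ simultaneously.
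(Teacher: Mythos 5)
Your proposal follows essentially the same route as the paper's proof: Galerkin projection via $\Pi_{N}$, entropy dissipation along the finite-dimensional SDE dynamics controlled by a log-Sobolev inequality for the symmetric (Ornstein-Uhlenbeck) part of the generator with $N$-uniform constant (the antisymmetric Burgers drift dropping out), Gr\"onwall, then passage to $N\to\infty$ via lower semicontinuity of relative entropy and the contraction under pushforward for the initial entropy. The caveats you flag at the end are exactly the points the paper addresses by citing \cite{GP20,GP16} for the antisymmetry and generator decomposition, \cite{G} for the tensorized Gaussian LSI, and Appendix 1.8 in \cite{KL} for joint lower semicontinuity, so there is no substantive difference in approach.
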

Let us now prove Theorem \ref{theorem:KPZ2}. We will choose $\mathbf{L}^{\e}(\t)$ therein to be $\grad\mathbf{Q}^{\e}(\t)$ constructed in Lemma \ref{lemma:KPZ23}. By Lemma \ref{lemma:KPZ21}, the Wasserstein estimate needed in Theorem \ref{theorem:KPZ2} follows immediately; the coupling between $\mathbf{L}(\t)$ and $\mathbf{L}^{\e}(\t)=\grad\mathbf{Q}^{\e}(\t)$ used in the Wasserstein metric is the coupling of $\mathbf{u}(\t,\cdot)$ and $\mathbf{u}^{\e}(\t,\cdot)$ used in the bullet points prior to Lemma \ref{lemma:KPZ21}, namely coupling the noises that define $\mathbf{u}(\t,\cdot)$ and $\mathbf{u}^{\e}(\t,\cdot)$. Indeed, Lemma \ref{lemma:KPZ21} implies that for this coupling, $\mathbf{u}(\t,\cdot)$ and $\mathbf{u}^{\e}(\t,\cdot)$ are within $\e$ of each other with respect to the metric used in the Wasserstein distance; below, we adopt the integration notation introduced and used in the proof of Lemma \ref{lemma:KPZ14}:
\begin{align}
|\langle\mathbf{u}(\t,\cdot),\Phi\rangle_{\mathbb{T}}-\langle\mathbf{u}^{\e}(\t,\cdot),\Phi\rangle_{\mathbb{T}}| \ = \ |\langle\mathbf{h}(\t,\cdot),\grad\Phi\rangle_{\mathbb{T}}-\langle\mathbf{h}^{\e}(\t,\cdot),\grad\Phi\rangle_{\mathbb{T}}| \ \leq \ \langle|\wt{\mathbf{h}}^{\e}(\t,\cdot)|,|\grad\Phi|\rangle_{\mathbb{T}} \ \leq \ \e\sup_{\x\in\mathbb{T}}|\grad\Phi(\x)|. \nonumber
\end{align}
The relative entropy estimate in Theorem \ref{theorem:KPZ2} follows from combining Lemma \ref{lemma:KPZ22} and Lemma \ref{lemma:KPZ23} with the observation that $\eta=\grad\mathbf{P}^{\infty}$ as probability measures, where $\grad\mathbf{P}^{\infty}$ denotes the pushforward of the Brownian bridge measure $\mathbf{P}^{\infty}$ on $\mathscr{C}(\mathbb{T})\subseteq\mathscr{D}(\mathbb{T})$ under the gradient map. This completes the proof. \qed
\begin{proof}[Proof of \emph{Lemma \ref{lemma:KPZ21}}]
This follows from the proof of Lemma \ref{lemma:KPZ16} for $\mathbf{h}^{1}=\mathbf{h}(0,\cdot)$ and $\mathbf{h}^{2}=\mathbf{h}^{\e}(0,\cdot)$ and $\alpha=\e$.
\end{proof}
\begin{proof}[Proof of \emph{Lemma \ref{lemma:KPZ22}}]
As in the proof of Lemma \ref{lemma:KPZ12}, it amounts to estimate from below the probability that Brownian bridge is within $\e$ of a deterministic continuous function on $\mathbb{T}$; said estimate can only depend on $\e$ and properties of said deterministic continuous function. But this was estimated in the proof of Lemma \ref{lemma:KPZ12}, so we are done.
\end{proof}
\begin{proof}[Proof of \emph{Lemma \ref{lemma:KPZ23}}]
Define $\mathbf{u}^{N,\e,1}$ as the solution to \eqref{eq:UVSBE1} for $\mathbf{F}(\x)=\x^2$ with initial data given by $\Pi_{N}\mathbf{u}^{\e}$. We then have $\lim_{N\to\infty}\mathbf{u}^{N,\e,1}=\mathbf{u}^{\e}$ as measures on $\mathscr{C}(\R_{\geq0},\mathscr{D}(\mathbb{T}))$ because the relative entropy of $\mathbf{u}^{\e}$ with respect to white noise $\eta$ measure is uniformly bounded in $N$ by Lemma \ref{lemma:KPZ22}; this convergence follows as in the proof of Lemma \ref{lemma:KPZ14}. Because the relative entropy is lower semi-continuous in both of its arguments jointly (see Appendix 1.8 in \cite{KL}), we then deduce the following relative entropy estimate in which $\grad\mathbf{Q}^{\e,N}(\t)$ is the law of $\mathbf{u}^{N,\e,1}(\t,\cdot)$ and $(\Pi_{N})_{\ast}\eta$ is the Fourier smoothing of white noise measure $\eta$:
\begin{align}
\mathsf{H}\left(\grad\mathbf{Q}^{\e}(\t)|\eta\right) \ \leq \ \limsup_{N\to\infty}\mathsf{H}\left(\grad\mathbf{Q}^{\e,N}(\t)|(\Pi_{N})_{\ast}\eta\right). \label{eq:KPZ231}
\end{align}
We are left with estimating the RHS of \eqref{eq:KPZ231}. To this end, we differentiate the RHS in $\t$. As $(\Pi_{N})_{\ast}\eta$ is an invariant Gaussian measure for \eqref{eq:UVSBE1} when viewed as a multi-dimensional SDE (see Section 2 of \cite{GP16}), we get the classical entropy production inequality below (see \cite{GPV} or Appendix 1.9 in \cite{KL}, which extends to the continuous state-space case easily); the RHS below is non-positive, namely the negative of a Dirichlet form, as Markov generators are non-positive definite:
\begin{align}
\partial_{\t}\mathsf{H}\left(\grad\mathbf{Q}^{\e,N}(\t)|(\Pi_{N})_{\ast}\eta\right) \ \lesssim \ \E^{(\Pi_{N})_{\ast}\eta}\mathbf{R}^{\e,N}(\t)\mathscr{L}_{N}\mathbf{R}^{\e,N}(\t), \label{eq:KPZ232}
\end{align}
where $\mathbf{R}^{\e,N}(\t)$ is the square root of the Radon-Nikodym derivative of $\grad\mathbf{Q}^{\e,N}(\t)$ with respect to $(\Pi_{N})_{\ast}\eta$ and $\mathscr{L}_{N}$ is the generator for the SDEs \eqref{eq:UVSBE1}, again for $\mathbf{F}(\x)=\x^2$. {Let us now observe:}
\begin{itemize}
\item {As noted in \cite{GJara,GP16,GP20}, the symmetric part of the generator $\mathscr{L}_{N}$, with respect to $(\Pi_{N})_{\ast}\eta$, can be written as $\sum_{|k|\leq N}\mathbf{1}_{k\neq0}\mathscr{L}_{\mathrm{OU}}^{(k)}$, where $\mathscr{L}_{\mathrm{OU}}^{(k)}$ is the generator of a one-dimensional Ornstein-Uhlenbeck process of speed $\gtrsim1$. Indeed, \eqref{eq:UVSBE1} without the asymmetry $\mathbf{F}$ diagonalizes via Fourier transform into independent Ornstein-Uhlenbeck processes as in \cite{GJara,GP16,GP20}.}
\item {The measure $(\Pi_{N})_{\ast}\eta$ is a product measure on Fourier coordinates $k\in\Z$ such that $|k|\leq N$ and $k\neq0$, since $\eta$ is a mean-zero space-time white noise on $\mathbb{T}$. Moreover, its marginal on the $k$-th Fourier coordinate is the invariant measure for $\mathscr{L}_{\mathrm{OU}}^{(k)}$; see \cite{GJara,GP16,GP20}.}
\item {Each $\mathscr{L}_{\mathrm{OU}}^{(k)}$ satisfies a log-Sobolev inequality with respect to its invariant measure, the $k$-th marginal of the product measure $(\Pi_{N})_{\ast}\eta$, with constant $\mathrm{O}(1)$. That is, for any probability density $f$ with respect to the $k$-th marginal of $(\Pi_{N})_{\ast}\eta$, we have 
\begin{align}
\E^{(\Pi_{N})_{\ast}\eta}f\log f \ \lesssim \ -\E^{(\Pi_{N})_{\ast}\eta}\sqrt{f}\mathscr{L}^{(k)}_{\mathrm{OU}}\sqrt{f}.
\end{align}
Thus, the sum over all $\{|k|\leq N, k\neq0\}$, which gives the symmetric part of $\mathscr{L}_{N}$, satisfies a log-Sobolev inequality with constant $\mathrm{O}(1)$ with respect to the product measure $(\Pi_{N})_{\ast}\eta$ (so-called ``tensorization" of log-Sobolev inequalities). For this, see Corollary 4.2 in \cite{G}.}
\end{itemize}
This resulting LSI from the above bullet points gives us the following from \eqref{eq:KPZ232}, where $C$ is some positive universal constant that depends on nothing except the constant $1$ in front of the Laplacian in \eqref{eq:UVSBE1} and would otherwise scale with the constants in front of the Laplacian and noise therein:
\begin{align}
\partial_{\t}\mathsf{H}\left(\grad\mathbf{Q}^{\e,N}(\t)|(\Pi_{N})_{\ast}\eta\right) \ \leq \ -C\mathsf{H}\left(\grad\mathbf{Q}^{\e,N}(\t)|(\Pi_{N})_{\ast}\eta\right).
\end{align}
From the previous differential inequality, we straightforwardly deduce
\begin{align}
\mathsf{H}\left(\grad\mathbf{Q}^{\e,N}(\t)|(\Pi_{N})_{\ast}\eta\right) \ \leq \ \exp(-C\t)\cdot\mathsf{H}\left(\grad\mathbf{Q}^{\e,N}(0)|(\Pi_{N})_{\ast}\eta\right). \label{eq:KPZ233} 
\end{align}
The relative entropy factor on the RHS of \eqref{eq:KPZ233} is bounded by itself without the $N$-superscripts, again because the $N$-superscript means pushforward under $\Pi_{N}$ projections, and relative entropy is contractive with respect to pushforwards; see Lemma \ref{lemma:re2}. Combining this observation with \eqref{eq:KPZ231} completes the proof.
\end{proof}
%
%
%
\section{Fractional Stochastic Burgers Equation}
Instead of \eqref{eq:SBE}, one can study the following stochastic PDE, in which we take $\alpha\in(1/2,1]$:
\begin{align}
\partial_{\t}\mathbf{u}_{\alpha} \ = \ -(-\Delta)^{\alpha}\mathbf{u}_{\alpha} + \beta\grad(\mathbf{u}_{\alpha}^{2}) + (-\Delta)^{\alpha/2}\xi. \label{eq:fSBE}
\end{align}
Equations of this type were studied in \cite{GJara,GP20} for stationary initial data, {but for $\alpha>3/4$} well-posedness and fairly robust analytic properties of \eqref{eq:fSBE} should be derivable via regularity structures \cite{Hai14} or paracontrolled distributions \cite{GPI} ({ $\alpha=3/4$ is ``critical" for these theories}). We note, however, that the anti-derivative of \eqref{eq:fSBE} satisfies the same type of comparison principle that \eqref{eq:SPDE} does that we have used throughout this paper. Moreover, for $\alpha>3/4$, equations of the following type should converge to solutions of \eqref{eq:fSBE}, in the same sense as in Theorem \ref{theorem:KPZ1} and with the same assumptions on the nonlinearity $\mathbf{F}$ in Assumption \ref{ass:intro2} and the initial data in Assumption \ref{ass:intro3}; in the following, $\xi^{N}$ is a smoothing of the noise, for example via Fourier smoothing $\Pi_{N}$, and like in Definition \ref{definition:intro1}, we may include $\Pi_{N}$ in front of the nonlinearity $\mathbf{F}$ below as well:
\begin{align}
\partial_{\t}\mathbf{u}_{\alpha}^{N} \ = \ -(-\Delta)^{\alpha}\mathbf{u}_{\alpha}^{N} + \e_{N}^{-1}\grad\mathbf{F}(\e_{N}^{1/2}\mathbf{u}_{\alpha}^{N}) + (-\Delta)^{\alpha/2}\xi^{N}. \label{eq:fSPDE}
\end{align}
This last convergence claim can be checked for stationary white noise initial data by following the arguments of \cite{GP16}; only the assumption $\alpha>1/2$ is needed for the analysis therein, but $\alpha>3/4$ is needed for uniqueness of limit points that was established in \cite{GP20}. For the reader's convenience, let us note that the only estimate that {needs} be checked in \cite{GP16} is the following, used in the proof of the Boltzmann-Gibbs principle (Proposition 11 of \cite{GP16}):
\begin{align}
{\sum}_{1\leq|k_{1}|,|k_{2}|\leq N}(|k_{1}|^{\alpha}+|k_{2}|^{\alpha})^{-2} \ll N \quad \mathrm{where} \quad \alpha > 1/2.
\end{align}
Since we have a maximum principle for equations of the type \eqref{eq:fSPDE} (the fractional Laplacian in \eqref{eq:fSPDE} is the generator for a Markov process), we may use our methods to prove a fractional version of Theorem \ref{theorem:KPZ1} if $\alpha>3/4$, and if $\alpha>1/2$ more generally if we settle for tightness, not convergence and identification of limit points. This depends on the invariant measure of \eqref{eq:fSPDE} models to be Gaussian white noise. In the same spirit, we can also prove convergence to white noise invariant measure for arbitrary continuous initial data similar to Theorem \ref{theorem:KPZ2} but for the fractional stochastic Burgers equation \eqref{eq:fSBE}. Indeed, if we take an approximation to \eqref{eq:fSBE} by Fourier smoothing in the same way as we did for \eqref{eq:SBE} in the proof of Theorem \ref{theorem:KPZ2}, the symmetric part of the resulting finite-dimensional SDE is again given by independent Ornstein-Uhlenbeck processes with uniformly bounded from below speed. We conclude by emphasizing our methods provide a \emph{first}, intrinsic or extrinsic, notion of solutions to \eqref{eq:fSBE} with general continuous initial data; again, regularity structures \cite{Hai14} and paracontrolled distributions \cite{GPI} and energy solutions \cite{GJara,GP20} address H\"{o}lder regularity initial data, and for $\alpha\neq1$ the Cole-Hopf transform does not apply to make sense of \eqref{eq:fSBE}. {On the other hand, our method depends crucially on regularizing the fractional stochastic Burgers equation in a Markovian fashion with an explicit invariant measure, unlike regularity structures or paracontrolled distributions.}
\appendix
\section{PDE estimates}
{
\begin{lemma}\label{lemma:pde2}
For any $N>0$ and any initial data $\mathbf{u}^{N,1}(0,\cdot)\in\Pi_{N}\mathscr{L}^{2}(\mathbb{T})$, with probability 1 there exists a unique process $\mathbf{u}^{N,1}\in\mathscr{C}(\R_{\geq0},\Pi_{N}\mathscr{L}^{2}(\mathbb{T}))$ such that 
\begin{align}
\mathbf{u}^{N,1}(\t,\x) \ = \ \mathbf{u}^{N,1}(0,\x) + \int_{0}^{\t}\Delta\mathbf{u}^{N,1}(\s,\x)\d\s + \int_{0}^{\t}\e_{N}^{-1}\Pi_{N}\grad\mathbf{F}(\e_{N}^{1/2}\mathbf{u}^{N,1}(\s,\x))\d\s + \grad\Pi_{N}\mathbf{B}^{N}(\t,\x), \label{eq:pde2I}
\end{align}
where for each $\x\in\mathbb{T}$, the process $\mathbf{B}^{N}(\t,\x)$ is a continuous martingale with quadratic variation $2\t$. In particular, $\mathbf{u}^{N,1}(\t,\x)$ is smooth in $\x$ for all $\t\geq0$ with probability 1.
\end{lemma}
\begin{proof}
This follows from the argument at the beginning of Section 4 of \cite{GJara}, but $\mathbf{F}$ is not necessarily quadratic as it was in \cite{GJara}. However, all we need from $\mathbf{F}$ is that $\int_{\mathbb{T}}\mathbf{u}^{N,1}(\t,\x)\grad\mathbf{F}(\e_{N}^{1/2}\mathbf{u}^{N,1}(\t,\x)\d\x=0$ in order to use said argument from \cite{GJara}. This relation is proved in Lemma 5 of \cite{GP16}. (Technically, Lemma 5 of \cite{GP16} shows that we can move $\grad\mathbf{F}(\e_{N}^{1/2}\cdot)$ onto the other copy of $\mathbf{u}^{N,1}$ if we add a sign to the $\mathbb{T}$-integral, so that $\grad\mathbf{F}(\e_{N}^{1/2}\cdot)$ is ``asymmetric" with respect to Lebesgue measure on $\mathbb{T}$. This certainly implies$\int_{\mathbb{T}}\mathbf{u}^{N,1}(\t,\x)\grad\mathbf{F}(\e_{N}^{1/2}\mathbf{u}^{N,1}(\t,\x)\d\x=0$.) This completes the proof.
\end{proof}
\begin{lemma}\label{lemma:pde3}
Suppose $\partial_{\t}\mathbf{G}_{\t}(\x)=\Delta\mathbf{G}_{\t}(\x)$ for $\t>0$ and $\x\in\mathbb{T}$, and suppose $\mathbf{G}_{0}(\x)=\delta_{\x}$ as measures. Then for any $\Phi\in\mathscr{L}^{\infty}(\mathbb{T})$ and for any $\t\geq0$, we have
\begin{align}
|(\mathrm{e}^{\t\Delta}\Phi(\cdot))(\x)| \ &= \ |\int_{\mathbb{T}}\mathbf{G}_{\t}(\x-\y)\Phi(\y)\d\y| \ \leq \ \sup_{\y\in\mathbb{T}}|\Phi(\y)| \\
\int_{\mathbb{T}}|\grad\mathbf{G}_{\t}(\x)|\d\x \ &\lesssim \ 1+\t^{-\frac12}.
\end{align}
\end{lemma}
\begin{proof}
Let $\mathbf{H}$ solve $\partial_{\t}\mathbf{H}_{\t}(\x)=\Delta\mathbf{H}_{\t}(\x)$ and $\mathbf{H}_{0}(\x)=\delta_{\x}$ on $\R_{\geq0}\times\R$. By the method of images, we have the following representation of $\mathbf{G}$ in terms of the full-line heat kernel $\mathbf{H}$:
\begin{align}
\mathbf{G}_{\t}(\x) \ = \ \sum_{k\in\Z}\mathbf{H}_{\t}(\x+k).
\end{align}
Above, we identify $\mathbb{T}\simeq\R/\Z\simeq[0,1)$. Indeed, the RHS is $1$-periodic. It converges because $\mathbf{H}$ is Gaussian and decays sub-exponentially in space. Moreover, it is a sum of terms that vanish under $\partial_{\t}-\Delta$. Lastly, for $\x\in\mathbb{T}\simeq[0,1)$, we have $\x+k\not\in\mathbb{T}$ if $k\neq0$, so every $k\neq0$-term on the RHS vanishes at $\t=0$ and we recover just $\mathbf{H}_{0}(\x)=\delta_{\x}$. By uniqueness of solutions to the heat equation on $\mathbb{T}$ (the proof is the same as in the full line case), we prove the above relation holds. Using this relation, let us first estimate
\begin{align}
|\int_{\mathbb{T}}\mathbf{G}_{\t}(\x-\y)\Phi(\y)\d\y| \ &\leq \ \sup_{\y\in\mathbb{T}}|\Phi(\y)|\times\int_{\mathbb{T}}|\mathbf{G}_{\t}(\x-\y)|\d\y \\
&\leq \ \sup_{\y\in\mathbb{T}}|\Phi(\y)|\times\int_{\mathbb{T}}\sum_{k\in\Z}|\mathbf{H}_{\t}(\x+k)|\d\x \\
&\leq \ \sup_{\y\in\mathbb{T}}|\Phi(\y)|\int_{\R}|\mathbf{H}_{\t}(\x)|\d\x \ = \ \sup_{\y\in\mathbb{T}}|\Phi(\y)|.
\end{align}
Indeed, the last line follows by writing $\R$ as a disjoint union (over $k\in\Z$) of $k+\mathbb{T}$ and a standard Gaussian integration. We also have
\begin{align}
\int_{\mathbb{T}}|\grad\mathbf{G}_{\t}(\x)|\d\x \ &\leq \ \int_{\mathbb{T}}\sum_{k\in\Z}|\grad\mathbf{H}_{\t}(\x+k)|\d\x \ \leq \ \int_{\R}|\grad\mathbf{H}_{\t}(\x)|\d\x \ \lesssim \ 1+\t^{-\frac12},
\end{align}
where the last bound is a standard Gaussian estimate. This finishes the proof.
\end{proof}
\begin{lemma}\label{lemma:pde1}
Suppose $\mathbf{h}^{N,2}(0,\cdot)\in\mathscr{C}^{1}(\mathbb{T})$. With probability 1, there exists a unique $\mathbf{h}^{N,2}\in\mathscr{C}(\R_{\geq0},\mathscr{C}^{1}(\mathbb{T}))$ such that for all $\t\geq0$ and $\x\in\mathbb{T}$, we have 
\begin{align}
\mathbf{h}^{N,2}(\t,\x) \ &= \ (\mathrm{e}^{\t\Delta}\mathbf{h}^{N,2}(0,\cdot))(\x) \\
&+ \int_{0}^{\t}\left(\mathrm{e}^{(\t-\s)\Delta}\left(\e_{N}^{-1}\mathbf{F}(\e_{N}^{1/2}\grad(\mathbf{h}^{N,1}(\s,\cdot)+\mathbf{h}^{N,2}(\s,\cdot))) - \e_{N}^{-1}\mathbf{F}(\e_{N}^{1/2}\grad\mathbf{h}^{N,1}(\s,\cdot))\right)\right)\d\s.
\end{align}
Above, $\grad\mathbf{h}^{N,1}=\mathbf{u}^{N,1}$ from \emph{Lemma \ref{lemma:pde2}}. If, in addition to \emph{Assumption \ref{ass:intro2}}, we know $\mathbf{F}$ is smooth and satisfies $|\grad^{k}\mathbf{F}(\x)|\lesssim_{k}1$, then $\mathbf{h}^{N,2}\in\mathscr{C}(\R_{\geq0},\mathscr{C}^{2}(\mathbb{T}))$ and, with probability 1, satisfies the following PDE pointwise in $(\t,\x)$:
\begin{align}
\partial_{\t}\mathbf{h}^{N,2} \ = \ \Delta\mathbf{h}^{N,2} + \e_{N}^{-1}\mathbf{F}(\e_{N}^{1/2}\grad(\mathbf{h}^{N,1}+\mathbf{h}^{N,2})) - \e_{N}^{-1}\mathbf{F}(\e_{N}^{1/2}\grad\mathbf{h}^{N,1}). \label{eq:pde1I}
\end{align}
\end{lemma}
\begin{proof}
Let us first note that by Lemma \ref{lemma:pde2}, all derivatives of $\mathbf{h}^{N,1}$ are defined pointwise with probability 1. We now start with a contraction mapping argument. Let $\mathscr{X}$ be the set of functions $\mathbf{f}\in\mathscr{C}(\R_{\geq0},\mathscr{C}^{1}(\mathbb{T}))$ such that $\mathbf{f}(0,\cdot)=\mathbf{h}^{N,2}(0,\cdot)$. Define the nonlinear map $\mathfrak{S}$ acting on $\mathscr{X}$ by
\begin{align}
\mathfrak{S}\mathbf{f}(\t,\x) \ &= \ (\mathrm{e}^{\t\Delta}\mathbf{f}(0,\cdot))(\x) \\
&+ \int_{0}^{\t}\left(\mathrm{e}^{(\t-\s)\Delta}\left(\e_{N}^{-1}\mathbf{F}(\e_{N}^{1/2}\grad(\mathbf{h}^{N,1}(\s,\cdot)+\mathbf{f}(\s,\cdot)) - \e_{N}^{-1}\mathbf{F}(\e_{N}^{1/2}\grad\mathbf{h}^{N,1}(\s,\cdot))\right)\right)\d\s.
\end{align}
Given $\mathbf{f}_{1},\mathbf{f}_{2}\in\mathscr{X}$, because $\mathbf{f}_{1}(0,\cdot)=\mathbf{f}_{2}(0,\cdot)$ by definition of $\mathscr{X}$, we have 
\begin{align}
&\mathfrak{S}\mathbf{f}_{1}(\t,\x)-\mathfrak{S}\mathbf{f}_{2}(\t,\x) \\
&= \ \e_{N}^{-1}\int_{0}^{\t}\left(\mathrm{e}^{(\t-\s)\Delta}\left(\mathbf{F}(\e_{N}^{1/2}\grad(\mathbf{h}^{N,1}(\s,\cdot)+\mathbf{f}_{1}(\s,\cdot)))-\mathbf{F}(\e_{N}^{1/2}\grad(\mathbf{h}^{N,1}(\s,\cdot)+\mathbf{f}_{2}(\s,\cdot)))\right)\right)(\x)\d\s \label{eq:pde11a} \\
&\grad\mathfrak{S}\mathbf{f}_{1}(\t,\x)-\mathfrak{S}\mathbf{f}_{2}(\t,\x) \\
&= \ \e_{N}^{-1}\int_{0}^{\t}\left(\grad\mathrm{e}^{(\t-\s)\Delta}\left(\mathbf{F}(\e_{N}^{1/2}\grad(\mathbf{h}^{N,1}(\s,\cdot)+\mathbf{f}_{1}(\s,\cdot)))-\mathbf{F}(\e_{N}^{1/2}\grad(\mathbf{h}^{N,1}(\s,\cdot)+\mathbf{f}_{2}(\s,\cdot)))\right)\right)(\x)\d\s. \label{eq:pde11b}
\end{align}
Because $\mathbf{F}$ is uniformly Lipschitz, by Lemma \ref{lemma:pde3} and elementary manipulations, we have the following bounds in which the implied constant depends on $\mathbf{F}$ only through its Lipschitz constant:
\begin{align}
|\eqref{eq:pde11a}| \ &\lesssim_{N,\mathbf{F}} \ \int_{0}^{\t}\int_{\mathbb{T}}|\mathbf{G}_{\t-\s}(\x-\y)|\times|\grad\mathbf{f}_{1}(\s,\y)-\grad\mathbf{f}_{2}(\s,\y)|\d\y\d\s \\
&\leq \ \t\sup_{0\leq\s\leq\t}\sup_{\y\in\mathbb{T}}|\grad\mathbf{f}_{1}(\s,\y)-\grad\mathbf{f}_{2}(\s,\y)| \\
|\eqref{eq:pde11b}| \ &\lesssim_{N,\mathbf{F}} \ \int_{0}^{\t}\int_{\mathbb{T}}|\grad\mathbf{G}_{\t-\s}(\x-\y)|\times|\grad\mathbf{f}_{1}(\s,\y)-\grad\mathbf{f}_{2}(\s,\y)|\d\y\d\s \\
&\lesssim \ \int_{0}^{\t}(1+|\t-\s|^{-\frac12})\d\s\sup_{0\leq\s\leq\t}\sup_{\y\in\mathbb{T}}|\grad\mathbf{f}_{1}(\s,\y)-\grad\mathbf{f}_{2}(\s,\y)| \\
&\lesssim \ (\t+\t^{\frac12})\sup_{0\leq\s\leq\t}\sup_{\y\in\mathbb{T}}|\grad\mathbf{f}_{1}(\s,\y)-\grad\mathbf{f}_{2}(\s,\y)|.
\end{align}
By the above two displays, there exists $c=c(N,\mathbf{F})>0$ such that if $T_{0}\leq c$, then $\mathfrak{S}$ is a contraction on the space of functions $\mathbf{f}$ in $\mathscr{C}([0,T_{0}],\mathscr{C}^{1}(\mathbb{T}))$ such that $\mathbf{f}(0,\cdot)=\mathbf{h}^{N,2}(0,\cdot)$. This is a closed subspace of a Banach space $\mathscr{C}([0,T_{0}],\mathscr{C}^{1}(\mathbb{T}))$, so it itself is a Banach space. Therefore, there is a unique fixed point of $\mathfrak{S}$. This gives mild solutions until time $T_{0}$ with probability 1. Because the life-time $T_{0}$ does not depend on the initial data, we can iterate and obtain mild solutions for infinite life-time with probability 1.

It remains to show that if $\mathbf{F}$ is smooth, then we have classical $\mathscr{C}^{2}(\mathbb{T})$-valued solutions. To this end, consider the same map $\mathfrak{S}$ but acting on $\mathscr{Y}$, where $\mathscr{Y}$ is the space of functions $\mathbf{f}\in\mathscr{C}(\R_{\geq0},\mathscr{C}^{2}(\mathbb{T}))$ such that $\mathbf{f}(0,\cdot)=\mathbf{h}^{N,2}(0,\cdot)$. For any $\mathbf{f}_{1},\mathbf{f}_{2}\in\mathscr{Y}$, we have 
\begin{align}
&\grad^{2}\mathfrak{S}\mathbf{f}_{1}(\t,\x)-\grad^{2}\mathfrak{S}\mathbf{f}_{2}(\t,\x) \label{eq:pde12} \\
&= \ \e_{N}^{-1}\int_{0}^{\t}\left(\grad\mathrm{e}^{(\t-\s)\Delta}\left(\grad\mathbf{F}(\e_{N}^{1/2}\grad(\mathbf{h}^{N,1}(\s,\cdot)+\mathbf{f}_{1}(\s,\cdot)))-\grad\mathbf{F}(\e_{N}^{1/2}\grad(\mathbf{h}^{N,1}(\s,\cdot)+\mathbf{f}_{2}(\s,\cdot)))\right)\right)(\x)\d\s.
\end{align}
We now compute the $\grad\mathbf{F}$-terms. By the chain rule, for $i=1,2$ we have
\begin{align}
\grad\mathbf{F}(\e_{N}^{1/2}\grad(\mathbf{h}^{N,1}(\s,\cdot)+\mathbf{f}_{i}(\s,\cdot))) \ = \ \e_{N}^{1/2}\mathbf{F}'(\e_{N}^{1/2}\grad(\mathbf{h}^{N,1}(\s,\cdot)+\mathbf{f}_{i}(\s,\cdot)))(\grad^{2}\mathbf{h}^{N,1}(\s,\cdot)-\grad^{2}\mathbf{f}_{i}(\s,\cdot)).
\end{align}
Thus, because $\mathbf{F}$ has uniformly bounded derivatives by assumption, we deduce
\begin{align}
&|\grad\mathbf{F}(\e_{N}^{1/2}\grad(\mathbf{h}^{N,1}(\s,\cdot)+\mathbf{f}_{1}(\s,\cdot)))-\grad\mathbf{F}(\e_{N}^{1/2}\grad(\mathbf{h}^{N,1}(\s,\cdot)+\mathbf{f}_{2}(\s,\cdot)))| \\
&\lesssim_{N,\mathbf{F}} \ |\grad\mathbf{f}_{1}(\s,\cdot)-\grad\mathbf{f}_{2}(\s,\cdot)| + |\grad^{2}\mathbf{f}_{1}(\s,\cdot)-\grad^{2}\mathbf{f}_{2}(\s,\cdot)|.
\end{align}
Combining the previous three displays with Lemma \ref{lemma:pde3}, we deduce
\begin{align}
|\eqref{eq:pde12}| \ &\lesssim_{N,\mathbf{F}} \ \int_{0}^{\t}\int_{\mathbb{T}}|\grad\mathbf{G}_{\t-\s}(\x-\y)|\times\left(|\grad\mathbf{f}_{1}(\s,\y)-\grad\mathbf{f}_{2}(\s,\y)|+|\grad^{2}\mathbf{f}_{1}(\s,\y)-\grad^{2}\mathbf{f}_{2}(\s,\y)|\right)\d\y\d\s \\
&\lesssim \ \int_{0}^{\t}(1+|\t-\s|^{-\frac12})\d\s\times\sup_{0\leq\s\leq\t}\sup_{\y\in\mathbb{T}}\left(|\grad\mathbf{f}_{1}(\s,\y)-\grad\mathbf{f}_{2}(\s,\y)|+|\grad^{2}\mathbf{f}_{1}(\s,\y)-\grad^{2}\mathbf{f}_{2}(\s,\y)|\right) \\
&\lesssim \ (\t+\t^{\frac12})\sup_{0\leq\s\leq\t}\sup_{\y\in\mathbb{T}}\left(|\grad\mathbf{f}_{1}(\s,\y)-\grad\mathbf{f}_{2}(\s,\y)|+|\grad^{2}\mathbf{f}_{1}(\s,\y)-\grad^{2}\mathbf{f}_{2}(\s,\y)|\right).
\end{align}
In particular, there exists $c'=c'(N,\mathbf{F})>0$ such that if $T_{0}\leq c'$, then $\mathfrak{S}$ is a contraction on $\mathscr{Y}$, which is a closed subspace of the Banach space $\mathscr{C}(\R_{\geq0},\mathscr{C}^{2}(\mathbb{T}))$ is therefore a Banach space itself. This gives a fixed point and thus classical $\mathscr{C}^{2}(\mathbb{T})$-valued solutions for life-time $T_{0}$, which we can again bootstrap to infinite life-time, all with probability 1. To show this solution $\mathbf{h}^{N,2}$ satisfies \eqref{eq:pde1I} pointwise with probability 1, we compute by using $\partial_{\t}\mathrm{e}^{\t\Delta}=\Delta\mathrm{e}^{\t\Delta}$ and the fundamental theorem of calculus as follows:
\begin{align}
\partial_{\t}\mathbf{h}^{N,2}(\t,\x) \ &= \ \partial_{\t} (\mathrm{e}^{\t\Delta}\mathbf{h}^{N,2}(0,\cdot))(\x) \\
&+ \partial_{\t}\int_{0}^{\t}\left(\mathrm{e}^{(\t-\s)\Delta}\left(\e_{N}^{-1}\mathbf{F}(\e_{N}^{1/2}\grad(\mathbf{h}^{N,1}(\s,\cdot)+\mathbf{h}^{N,2}(\s,\cdot))) - \e_{N}^{-1}\mathbf{F}(\e_{N}^{1/2}\grad\mathbf{h}^{N,1}(\s,\cdot))\right)\right)\d\s \\
&= \ \Delta(\mathrm{e}^{\t\Delta}\mathbf{h}^{N,2}(0,\cdot))(\x) \\
&+ \ \int_{0}^{\t}\partial_{\t}\left(\mathrm{e}^{(\t-\s)\Delta}\left(\e_{N}^{-1}\mathbf{F}(\e_{N}^{1/2}\grad(\mathbf{h}^{N,1}(\s,\cdot)+\mathbf{h}^{N,2}(\s,\cdot))) - \e_{N}^{-1}\mathbf{F}(\e_{N}^{1/2}\grad\mathbf{h}^{N,1}(\s,\cdot))\right)\right)\d\s \\
&+ \ \left(\mathrm{e}^{(\t-\s)\Delta}\left(\e_{N}^{-1}\mathbf{F}(\e_{N}^{1/2}\grad(\mathbf{h}^{N,1}(\s,\cdot)+\mathbf{h}^{N,2}(\s,\cdot))) - \e_{N}^{-1}\mathbf{F}(\e_{N}^{1/2}\grad\mathbf{h}^{N,1}(\s,\cdot))\right)\right)|_{\s=\t} \label{eq:pde13}\\
&= \ \Delta(\mathrm{e}^{\t\Delta}\mathbf{h}^{N,2}(0,\cdot))(\x) \label{eq:pde14a} \\
&+ \ \Delta\int_{0}^{\t}\left(\mathrm{e}^{(\t-\s)\Delta}\left(\e_{N}^{-1}\mathbf{F}(\e_{N}^{1/2}\grad(\mathbf{h}^{N,1}(\s,\cdot)+\mathbf{h}^{N,2}(\s,\cdot))) - \e_{N}^{-1}\mathbf{F}(\e_{N}^{1/2}\grad\mathbf{h}^{N,1}(\s,\cdot))\right)\right)\d\s \label{eq:pde14b}\\
&+ \ \e_{N}^{-1}\mathbf{F}(\e_{N}^{1/2}\grad(\mathbf{h}^{N,1}+\mathbf{h}^{N,2})) - \e_{N}^{-1}\mathbf{F}(\e_{N}^{1/2}\grad\mathbf{h}^{N,1}).\label{eq:pde14c}
\end{align}
We note that setting $\s=\t$ in \eqref{eq:pde13} is, rigorously, equal to removing $\mathrm{e}^{(\t-\s)\Delta}$ (because it is a delta function at $\s=\t$) and evaluating the rest at $\s=\t$ and $\cdot=\x$. This is because what is inside $\mathrm{e}^{(\t-\s)\Delta}$ (and thus what is being integrated against an approximation to the identity) is continuous in space-time with probability 1 (because of almost sure regularity of $\mathbf{F}$ and $\grad\mathbf{h}^{N,1}$ and $\grad\mathbf{h}^{N,2}$). Now, note \eqref{eq:pde14a}-\eqref{eq:pde14b} is equal to $\Delta\mathbf{h}^{N,2}(\t,\x)$, and \eqref{eq:pde14c} is the rest of \eqref{eq:pde1I}. This completes the proof.
\end{proof}
}
%
%
%
\section{Wasserstein Distance}\label{section:w}
In this section, we specialize to the Polish space given by the space of differentiable functions on $\mathbb{T}$ that are bounded and whose derivatives are bounded. We will let $\mathscr{X}$ denote its Banach dual, equipped with the operator norm topology.
\begin{definition}
Provided any probability measures $\mathbf{P}(1),\mathbf{P}(2)$ on $\mathscr{X}$, the Wasserstein metric is given by the following in which the infimum is given over all couplings of $\mathbf{P}(1)$ and $\mathbf{P}(2)$, namely all measures $\mathbf{P}(1,2)$ on $\mathscr{X}\times\mathscr{X}$ such that the projection onto the first coordinate is $\mathbf{P}(1)$ and the projection onto the second coordinate is $\mathbf{P}(2)$:
\begin{align}
\mathsf{W}(\mathbf{P}(1),\mathbf{P}(2)) \ = \ \inf_{(\mathbf{u}^{1},\mathbf{u}^{2})\sim\mathbf{P}(1,2)}\E^{\mathbf{P}(1,2)}\|\mathbf{u}^{1}-\mathbf{u}^{2}\|_{\mathscr{X}}.
\end{align}
\end{definition}
\section{Relative Entropy}\label{section:re}
\begin{definition}\label{definition:re1}
Provided any complete separable metric space $\mathscr{S}$ and any two probability measures $\mathbf{P}(1)$ and $\mathbf{P}(2)$, the relative entropy of $\mathbf{P}(1)$ with respect to $\mathbf{P}(2)$ is equal to $\infty$ if $\mathbf{P}(1)$ is not absolutely continuous with respect to $\mathbf{P}(2)$, and if it is (so that $\mathbf{P}(1)\ll\mathbf{P}(2)$), it is the following in which $\mathbf{R}(1,2)$ is the Radon-Nikodym derivative of $\mathbf{P}(1)$ with respect to $\mathbf{P}(2)$ and in which the expectation below is with respect to $\mathbf{P}(2)$:
\begin{align}
\mathsf{H}(\mathbf{P}(1)|\mathbf{P}(2)) \ \overset{\bullet}= \ \E^{\mathbf{P}(2)}\mathbf{R}(1,2)\log\mathbf{R}(1,2).
\end{align}
\end{definition}
We now collect a set of standard relative entropy estimates that are used crucially in this paper. These estimates are used in \cite{GJS15,GP16}, for example, and are rather standard, so we only provide brief proofs. 
\begin{lemma}\label{lemma:re2}
Consider any complete separable metric space $\mathscr{S}$ with any two probability measures $\mathbf{P}^{1}$ and $\mathbf{P}^{2}$ and any event $\mathcal{E}\subseteq\mathscr{S}$. We first have the following entropy inequality:
\begin{align}
\mathbf{P}^{1}(\mathcal{E}) \ \leq \ \left(\log2+\mathsf{H}(\mathbf{P}^{1}|\mathbf{P}^{2})\right) \left(\log\left(1+\mathbf{P}^{2}(\mathcal{E})^{-1}\right)\right)^{-1}. \label{eq:re21}
\end{align}
Retain the context prior to \emph{\eqref{eq:re21}}. For another complete separable metric space $\mathscr{S}'$ and continuous map $\Pi:\mathscr{S}\to\mathscr{S}'$, if $\Pi_{\ast}$ denotes the pushforward on probability measures on $\mathscr{S}$, we have the following contraction principle:
\begin{align}
\mathsf{H}(\Pi_{\ast}\mathbf{P}^{1}|\Pi_{\ast}\mathbf{P}^{2}) \ \leq \ \mathsf{H}(\mathbf{P}^{1}|\mathbf{P}^{2}). \label{eq:re22}
\end{align}
We now let $\mathbf{P}^{1,\mathrm{dyn}}$ and $\mathbf{P}^{2,\mathrm{dyn}}$ be path-space measures on $\mathscr{C}(\R_{\geq0},\mathscr{S})$ given by the same Markov process valued in $\mathscr{S}$ but with initial law given by $\mathbf{P}^{1}$ and $\mathbf{P}^{2}$, respectively. Then the relative entropy of $\mathbf{P}^{1,\mathrm{dyn}}$ with respect to $\mathbf{P}^{2,\mathrm{dyn}}$ is controlled by that of $\mathbf{P}^{1}$ with respect to $\mathbf{P}^{2}$, so that we have the following inequality:
\begin{align}
\mathsf{H}(\mathbf{P}^{1,\mathrm{dyn}}|\mathbf{P}^{2,\mathrm{dyn}}) \ \leq \ \mathsf{H}(\mathbf{P}^{1}|\mathbf{P}^{2}). \label{eq:re23}
\end{align}
Lastly, if a sequence of path-space measures $\{\mathbf{P}^{2,\mathrm{dyn},N}\}_{N\geq0}$ on $\mathscr{C}(\R_{\geq0},\mathscr{S})$ is tight and the relative entropies of another set of path-space measures $\{\mathbf{P}^{1,\mathrm{dyn},N}\}_{N\geq0}$ with respect to $\{\mathbf{P}^{2,\mathrm{dyn},N}\}_{N\geq0}$ are uniformly bounded in $N$, then the sequence $\{\mathbf{P}^{1,\mathrm{dyn},N}\}_{N\geq0}$ on $\mathscr{C}(\R_{\geq0},\mathscr{S})$ is also tight.
\end{lemma}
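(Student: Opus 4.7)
The plan is to establish the four assertions sequentially; the first serves as the main workhorse behind the tightness claim at the end, while the two middle estimates follow from standard manipulations of the Radon--Nikodym derivative.

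For the entropy inequality \eqref{eq:re21}, I would invoke the Donsker--Varadhan variational characterization
\[
\mathsf{H}(\mathbf{P}^{1}|\mathbf{P}^{2}) \ = \ {\sup}_{f} \bigl\{\E^{\mathbf{P}^{1}} f \ - \ \log \E^{\mathbf{P}^{2}} e^{f}\bigr\},
\]
where the supremum ranges over bounded measurable $f$. Specializing to $f = \lambda\mathbf{1}_{\mathcal{E}}$ with $\lambda = \log(1 + \mathbf{P}^{2}(\mathcal{E})^{-1})$ yields $\E^{\mathbf{P}^{2}} e^{f} = e^{\lambda}\mathbf{P}^{2}(\mathcal{E}) + \mathbf{P}^{2}(\mathcal{E}^{c}) = 2 - \mathbf{P}^{2}(\mathcal{E}) \leq 2$, so rearranging the variational inequality $\lambda \mathbf{P}^{1}(\mathcal{E}) - \log 2 \leq \mathsf{H}(\mathbf{P}^{1}|\mathbf{P}^{2})$ gives \eqref{eq:re21}. (In the degenerate case $\mathbf{P}^{2}(\mathcal{E}) = 0$, absolute continuity forces $\mathbf{P}^{1}(\mathcal{E}) = 0$ and \eqref{eq:re21} is trivial.)

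For the data-processing inequality \eqref{eq:re22}, I would identify the Radon--Nikodym derivative of $\Pi_{\ast}\mathbf{P}^{1}$ with respect to $\Pi_{\ast}\mathbf{P}^{2}$, pulled back along $\Pi$, with the $\mathbf{P}^{2}$-conditional expectation $\E^{\mathbf{P}^{2}}[d\mathbf{P}^{1}/d\mathbf{P}^{2} \mid \sigma(\Pi)]$. Then Jensen's inequality applied to the convex function $\x\log\x$ in the conditional expectation, followed by taking the outer $\mathbf{P}^{2}$-expectation, produces \eqref{eq:re22}. For \eqref{eq:re23}, the key observation is that under the common Markov transition kernel $\mathbf{Q}$, the path-space measures decompose as $\mathbf{P}^{i,\mathrm{dyn}}(d\omega) = \mathbf{P}^{i}(dx_{0})\,\mathbf{Q}(x_{0}, d\omega_{>0})$, so the Radon--Nikodym derivative $d\mathbf{P}^{1,\mathrm{dyn}}/d\mathbf{P}^{2,\mathrm{dyn}}$ depends only on the initial coordinate and coincides with $(d\mathbf{P}^{1}/d\mathbf{P}^{2})(x_{0})$. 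Taking the $\mathbf{P}^{2,\mathrm{dyn}}$-expectation of $\x\log\x$ of this derivative gives in fact equality $\mathsf{H}(\mathbf{P}^{1,\mathrm{dyn}}|\mathbf{P}^{2,\mathrm{dyn}}) = \mathsf{H}(\mathbf{P}^{1}|\mathbf{P}^{2})$, which is stronger than \eqref{eq:re23}.

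For the final tightness statement, given any $\delta > 0$, I would use tightness of $\{\mathbf{P}^{2,\mathrm{dyn},N}\}_{N\geq0}$ to select a compact $K \subseteq \mathscr{C}(\R_{\geq0},\mathscr{S})$ with $\mathbf{P}^{2,\mathrm{dyn},N}(K^{c}) \leq \delta'$ uniformly in $N$. Applying \eqref{eq:re21} with $\mathcal{E} = K^{c}$ and the uniform entropy bound $\mathsf{H}(\mathbf{P}^{1,\mathrm{dyn},N}|\mathbf{P}^{2,\mathrm{dyn},N}) \leq C$ produces $\mathbf{P}^{1,\mathrm{dyn},N}(K^{c}) \leq (\log 2 + C)/\log(1 + \delta'^{-1})$, which can be made smaller than $\delta$ by taking $\delta'$ sufficiently small, giving tightness of $\{\mathbf{P}^{1,\mathrm{dyn},N}\}_{N\geq0}$. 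All steps are routine; the only mild care needed is the identification of the pushed-forward Radon--Nikodym derivative as a conditional expectation in \eqref{eq:re22}, a standard fact (cf.\ Appendix 1.8 in \cite{KL}).
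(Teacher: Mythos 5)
Your proof is correct and follows essentially the same route as the paper, except that where the paper defers to references the self-contained derivations are supplied: the paper cites (2.6) of \cite{GJS15} for \eqref{eq:re21}, while you reproduce it directly from the Donsker--Varadhan variational formula with $f=\lambda\mathbf{1}_{\mathcal{E}}$; and the paper proves \eqref{eq:re22} by restricting the admissible class in the variational representation (Corollary 4.15 of \cite{BLM}), while you use the equivalent conditional-expectation-plus-Jensen argument. For \eqref{eq:re23} you observe that the Radon--Nikodym derivative depends only on the initial coordinate (so equality in fact holds), which is the same content as the paper's invocation of the chain rule for relative entropy, just phrased without the chain-rule machinery. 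The tightness deduction is identical. One small arithmetic slip: with your choice of $\lambda$, $\E^{\mathbf{P}^{2}}e^{f} = (1+\mathbf{P}^{2}(\mathcal{E})^{-1})\mathbf{P}^{2}(\mathcal{E}) + \mathbf{P}^{2}(\mathcal{E}^{c}) = 2$ exactly, not $2-\mathbf{P}^{2}(\mathcal{E})$; the bound $\leq 2$ you use is still valid, so the conclusion stands.
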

\begin{proof}
The estimate \eqref{eq:re21} is (2.6) in \cite{GJS15}. {Proving \eqref{eq:re22} starts with the duality formula below for relative entropy, which itself follows by noting the convex conjugate of $\x\log\x$ is $\mathrm{e}^{\x}$ (see Corollary 4.15 in \cite{BLM}):
\begin{align}
\mathsf{H}(\Pi_{\ast}\mathbf{P}^{1}|\Pi_{\ast}\mathbf{P}^{2}) \ = \ \sup_{\psi\in\mathscr{L}^{\infty}(\mathscr{S}')}\left(\E^{\Pi_{\ast}\mathbf{P}^{1}}\psi - \log\E^{\Pi_{\ast}\mathbf{P}^{2}}\mathrm{e}^{\psi}\right). \label{eq:re221}
\end{align}
For any $\psi\in\mathscr{L}^{\infty}(\mathscr{S}')$, we have the following by definition of pushforward:
\begin{align}
\E^{\Pi_{\ast}\mathbf{P}^{1}}\psi - \log\E^{\Pi_{\ast}\mathbf{P}^{2}}\mathrm{e}^{\psi} \ = \ \E^{\mathbf{P}^{1}}\psi\circ\Pi - \log\E^{\mathbf{P}^{2}}\psi\circ\Pi.
\end{align}
If $\psi\in\mathscr{L}^{\infty}(\mathscr{S}')$, then $\psi\circ\Pi\in\mathscr{L}^{\infty}(\mathscr{S})$. Thus,
\begin{align}
\sup_{\psi\in\mathscr{L}^{\infty}(\mathscr{S}')}\left(\E^{\Pi_{\ast}\mathbf{P}^{1}}\psi - \log\E^{\Pi_{\ast}\mathbf{P}^{2}}\mathrm{e}^{\psi}\right) \ &= \ \sup_{\psi\in\mathscr{L}^{\infty}(\mathscr{S}')}\left(\E^{\mathbf{P}^{1}}\psi\circ\Pi - \log\E^{\mathbf{P}^{2}}\psi\circ\Pi\right) \\
&\leq \ \sup_{\psi\in\mathscr{L}^{\infty}(\mathscr{S})}\left(\E^{\mathbf{P}^{1}}\psi - \log\E^{\mathbf{P}^{2}}\mathrm{e}^{\psi}\right),
\end{align}
which gives \eqref{eq:re22} when combined with \eqref{eq:re221}.} The estimate \eqref{eq:re23} follows from conditioning the path-space measures $\mathbf{P}^{i,\mathrm{dyn}}$ as the same path-space measure with initial conditions sampled via $\mathbf{P}^{i}$ data, respectively, and then employing the chain rule for relative entropy (see Section 4.2 in \cite{BLM}), which cancels the contribution of the shared path-space part of each $\mathbf{P}^{i,\mathrm{dyn}}$ measure. {In particular, by said chain rule for relative entropy, we decompose $\mathsf{H}(\mathbf{P}^{1,\mathrm{dyn}}|\mathbf{P}^{2,\mathrm{dyn}})$ into relative entropy of initial data laws plus ``worst-case relative entropy" of the dynamics themselves:
\begin{align}
\mathsf{H}(\mathbf{P}^{1,\mathrm{dyn}}|\mathbf{P}^{2,\mathrm{dyn}}) \ \leq \ \mathsf{H}(\mathbf{P}^{1}|\mathbf{P}^{2})+ \sup_{s\in\mathscr{S}}\mathsf{H}(\mathbf{P}^{1,\mathrm{dyn}}|_{s}|\mathbf{P}^{2,\mathrm{dyn}}|_{s}),
\end{align}
where $\mathsf{H}(\mathbf{P}^{1,\mathrm{dyn}}|_{s}|\mathbf{P}^{2,\mathrm{dyn}}|_{s})$ is the relative entropy of the path-space measure $\mathbf{P}^{1,\mathrm{dyn}}$ conditioning on the initial data to be $s\in\mathscr{S}$, with respect to $\mathbf{P}^{2,\mathrm{dyn}}$ conditioning on the same initial data $s\in\mathscr{S}$. But these two path-space measures are the same because they have the same initial data and same dynamics, so the sup on the RHS above is equal to zero.} The last paragraph follows by \eqref{eq:re21}, which estimates complements of compact sets in $\mathscr{C}(\R_{\geq0},\mathscr{S})$ under $\mathbf{P}^{1,\mathrm{dyn},N}$ by their probabilities under $\mathbf{P}^{2,\mathrm{dyn},N}$ and using tightness of $\mathbf{P}^{2,\mathrm{dyn},N}$.
\end{proof}


%
%
%
\end{document}